\newif \ifpreprint
\newcommand{\SlopeTriangle}[6]
{
    % #1. Relative offset in x direction.
    % #2. Width in x direction, so xA-xB.
    % #3. Relative offset in y direction.
    % #4. Slope d(y)/d(x).
    % #5. Label
    % #6. Plot options.

    \pgfplotsextra
    {
        \pgfkeysgetvalue{/pgfplots/xmin}{\xmin}
        \pgfkeysgetvalue{/pgfplots/xmax}{\xmax}
        \pgfkeysgetvalue{/pgfplots/ymin}{\ymin}
        \pgfkeysgetvalue{/pgfplots/ymax}{\ymax}

        % Calculate auxilliary quantities, in relative sense.
        \pgfmathsetmacro{\xArel}{#1}
        \pgfmathsetmacro{\yArel}{#3}
        \pgfmathsetmacro{\xBrel}{#1-#2}
        \pgfmathsetmacro{\yBrel}{\yArel}
        \pgfmathsetmacro{\xCrel}{\xArel}
        %\pgfmathsetmacro{\yCrel}{ln(\yC/exp(\ymin))/ln(exp(\ymax)/exp(\ymin))} % REPLACE THIS EXPRESSION WITH AN EXPRESSION INDEPENDENT OF \yC TO PREVENT THE 'DIMENSION TOO LARGE' ERROR.

        \pgfmathsetmacro{\lnxB}{\xmin*(1-(#1-#2))+\xmax*(#1-#2)} % in [xmin,xmax].
        \pgfmathsetmacro{\lnxA}{\xmin*(1-#1)+\xmax*#1} % in [xmin,xmax].
        \pgfmathsetmacro{\lnyA}{\ymin*(1-#3)+\ymax*#3} % in [ymin,ymax].
        \pgfmathsetmacro{\lnyC}{\lnyA+#4*(\lnxA-\lnxB)}
        \pgfmathsetmacro{\yCrel}{\lnyC-\ymin)/(\ymax-\ymin)} % THE IMPROVED EXPRESSION WITHOUT 'DIMENSION TOO LARGE' ERROR.

        % Define coordinates for \draw. MIND THE 'rel axis cs' as opposed to the 'axis cs'.
        \coordinate (A) at (rel axis cs:\xArel,\yArel);
        \coordinate (B) at (rel axis cs:\xBrel,\yBrel);
        \coordinate (C) at (rel axis cs:\xCrel,\yCrel);

        % Draw slope triangle.
        \draw[#6]   (A)-- node[anchor=north] {#5}
                    (B)--
                    (C)--
                    cycle;
    }
}
\newtheorem{theorem}{Theorem}
\newtheorem{lemma}[theorem]{Lemma}
\newtheorem{corollary}[theorem]{Corollary}
\newtheorem{remark}[theorem]{Remark}
\numberwithin{equation}{section}
\numberwithin{theorem}{section}
\newcommand{\eq}{:=}
\newcommand\RT{\bm{\mathcal{R\hspace{-0.1em}T}}\hspace{-0.25em}}
\newcommand{\TR}{\textup R}
\newcommand{\TD}{\textup D}
\newcommand{\TK}{\textup K}
\newcommand{\TM}{\textup M}
\newcommand{\JAC}{\mathbb{J}}
\newcommand{\oa}{{\omega_\ba}}
\newcommand{\oK}{{\omega_K}}
\newcommand{\grad}{\boldsymbol \nabla}
\renewcommand{\div}{\grad {\cdot}}
\newcommand{\bgrad}{\grad_h}
\newcommand{\vv}{\boldsymbol v}
\newcommand{\ddiv}{\operatorname{div}}
\newcommand{\CJ}{\mathcal J}
\newcommand{\CI}{\mathcal I}
\newcommand{\bII}{\CI_h}
\newcommand{\bpi}{\pi_h}
\newcommand{\bn}{\boldsymbol n}
\newcommand{\jmp}[1]{[\![#1]\!]}
\newcommand{\haK}{\tau^{\ba}_K}
\newcommand{\ha}{h_{\ba}}
\newcommand{\BH}{\boldsymbol H}
\newcommand{\BL}{\boldsymbol L}
\newcommand{\BW}{\boldsymbol W}
\newcommand{\BX}{\boldsymbol X}
\newcommand{\ba}{{\boldsymbol a}}
\newcommand{\bb}{\boldsymbol b}
\newcommand{\br}{\boldsymbol r}
\newcommand{\bv}{\boldsymbol v}
\newcommand{\bw}{\boldsymbol w}
\newcommand{\bx}{\boldsymbol x}
\newcommand{\by}{\boldsymbol y}
\newcommand{\bzero}{\boldsymbol 0}
\newcommand{\CP}{\mathcal P}
\newcommand{\CT}{\mathcal T}
\newcommand{\CF}{\mathcal F}
\newcommand{\CV}{\mathcal V}
\newcommand{\BCP}{\boldsymbol \CP}
\newcommand{\po}{\widehat \psi_{\bzero}}
\newcommand{\hu}{\widehat u}
\newcommand{\hv}{\widehat v}
\newcommand{\hs}{\widehat s}
\newcommand{\oo}{\widehat \omega_{\bzero}}
\newcommand{\CTo}{\widehat \CT_{\bzero}}
\newcommand{\pa}{\psi_{\ba}}
\newcommand{\la}{\lambda_{\ba}}
\newcommand{\lo}{\widehat \lambda_{\bzero}}
\newcommand{\ra}{\rho_{\ba}}
\newcommand{\CTa}{\CT_{\ba}}
\newcommand{\CFoi}{\widehat \CF_{\bzero}^{\rm i}}
\newcommand{\CFo}{\widehat \CF_{\bzero}}
\newcommand{\hK}{\widehat K}
\newcommand{\phig}{\phi^{\rm g}}
\newcommand{\phid}{\phi^{\rm d}}
\newcommand{\REF}{\widehat R}
\begin{document}

\ifpreprint
\title{A quasi-interpolation operator yielding fully computable error bounds}
\author{T. Chaumont-Frelet$^\star$ and M. Vohral\'ik$^{\dagger,\ddagger}$}

\address{\vspace{-.5cm}}
\address{\noindent \tiny \textup{$^\star$Inria, Univ. Lille, CNRS, UMR 8524 -- Laboratoire Paul Painlev\'e, 40 Av. Halley, 59650 Villeneuve-d'Ascq, France}}
\address{\noindent \tiny \textup{$^\dagger$Inria, 48 rue Barrault, 75647 Paris, France}}
\address{\noindent \tiny \textup{$^\ddagger$CERMICS, Ecole nationale des ponts et chauss\'ees, IP Paris, 77455 Marne-la-Vall\'ee, France}}

\begin{abstract} 
We design a quasi-interpolation operator from the Sobolev space $H^1_0(\Omega)$
to its finite-dimensional finite element subspace formed by piecewise polynomials
on a simplicial mesh with a computable approximation constant. The operator 1)
is defined on the entire $H^1_0(\Omega)$, no additional regularity is needed;
2) allows for an arbitrary polynomial degree; 3) works in any space dimension;
4) is defined locally, in vertex patches of mesh elements; 5) yields optimal estimates
for both the $H^1$ seminorm and the $L^2$ norm error; 6) gives a computable constant
for both the $H^1$ seminorm and the $L^2$ norm error; 7) leads to the equivalence of
global-best and local-best errors; 8) possesses the projection property.
Its construction follows the so-called potential reconstruction from a posteriori
error analysis. Numerical experiments illustrate that our quasi-interpolation operator
systematically gives the correct convergence rates in both the $H^1$ seminorm and
the $L^2$ norm and its certified overestimation factor is rather sharp and stable
in all tested situations.

\vspace{.25cm}
\noindent
{\sc Keywords.} finite element method; interpolation operator; stable projection; error estimate; guaranteed bound; minimal regularity
\end{abstract}
\fi

\maketitle

\section{Introduction}

Due to their ability to operate on general geometries,
finite element methods based on unstructured meshes have
become very popular to discretize boundary value problems
over the past decades~\cite{ciarlet_2002a,Ern_Guermond_FEs_I_21}.
The approximation properties of the finite element spaces are
one of the key factors that eventually govern the accuracy
of the resulting numerical schemes. Such approximation properties
are often obtained thanks to (quasi-)interpolation operators, i.e.,
mappings that associate to a given function $u$ from the
infinite-dimensional Sobolev space $H^1_0(\Omega)$ an explicitly constructed
approximation $\CI_h u$ in the finite-dimensional finite element space.

Given a polytopal domain $\Omega \subset \mathbb R^d$, $d \geq 1$,
triangulated by a simplicial mesh $\CT_h$, this work focuses on conforming
finite elements of polynomial degree $p \geq 1$, $\CP_p(\CT_h) \cap H^1_0(\Omega)$,
forming a finite-dimensional subspace of the Sobolev space $H^1_0(\Omega)$. In this context,
a natural way to associate to a function $u \in H^1_0(\Omega)$ an interpolant
$\CI_h u \in \CP_p(\CT_h) \cap H^1_0(\Omega)$ is by requiring
that $(\CI_h u)(\bx_\ell) = u(\bx_\ell)$ for suitably
chosen interpolation nodes $\{\bx_\ell\}_\ell$. This process is known as Lagrange interpolation,
and presents the advantage that it is fully local: For a mesh element $K \in \CT_h$, the
interpolation error $(u-\CI_h u)|_K$ will only depend on $u|_K$. Besides, fully-explicit error
bounds expressed in terms of Sobolev semi-norms of $u$ are available in the
literature~\cite{arcangeli_gout_1976a,Cars_Ged_Rim_expl_cnst_12,Hao_Guan_Mao_Chen_cnsts_FEs_21,Kik_Liu_cnsts_P0_P1_07,kobayashi_tsuchiya_2014a,Kob_interp_Lag_15,liu_kikuchi_2010a,Liu_You_cnsts_P2_18}.

The main drawback of Lagrange interpolation, however, is that
it requires the nodal values of $u$ to be well-defined, which
is not the case (expect in one space dimension, $d=1$) if $u$ barely
sits in $H^1_0(\Omega)$. When considering, e.g., the Poisson problem
in a convex domain with an $L^2(\Omega)$ right-hand side, this is
not an issue, since then the solution $u$ belongs to $H^2(\Omega)$,
enabling the use of Lagrange interpolation. However, the approach fails
when considering more complex geometries and/or coefficients.
Additionally, some applications, such as a posteriori error
estimation~\cite{carstensen_1999a,melenk_2005a}, preconditioning~\cite{schoberl_melenk_pechstein_zaglmayr_2008a}, or localized orthogonal
decomposition methods~\cite{maalqvist_peterseim_2014a},
crucially hinge on interpolation operators defined for generic
$H^1_0(\Omega)$ target functions (i.e. with
the minimal variational regularity of the PDE under consideration).

There is therefore a need to define interpolation operators for non-smooth functions.
The resulting operators are often called ``quasi-interpolation'' operators, since
they do not, strictly-speaking, interpolate the target at any point. The seminal contribution
in this direction is the work of Cl\'ement~\cite{clement_1975a}, where an operator
defined over the whole $L^1(\Omega)$ space is introduced with optimal approximation properties.
The Cl\'ement operator does not preserve boundary conditions, and it is not a projection,
i.e., $\CI_h u_h \neq u_h$ for $u_h$ already in the finite element space.
These two properties have been incorporated later on in the key contribution of
Scott and Zhang~\cite{scott_zhang_1990a}. For these two operators, the degrees of freedom (dofs)
of the interpolant $\CI_h u$ are fixed by suitably weighted mean values of the target function
$u$. Another family of quasi-interpolation operators is due to Oswald~\cite{oswald_1999a}.
There, a discontinuous approximation is first built by
projecting element-wise $u|_K$ onto a local polynomial space $\CP_p(K)$
of the mesh cell $K \in \CT_h$.
The dofs of this approximation that are shared by several elements are then
suitably averaged to produce a conforming interpolant
$\CI_h u \in \CP_p(\CT_h) \cap H^1_0(\Omega)$.
Since then, these constructions have been improved in several ways,
and the design of quasi-interpolation operators is still an active field of
research~\cite{ern_guermond_2017a,Gaw_Holst_Licht_loc_approx_FEEC_21,falk_winther_2014a,hiptmair_pechstein_2019,licht_2023a,schoberl_2001a,Voh_loc_glob_H1_25}.

Because the nodal evaluation of a generic target function $u \in H^1_0(\Omega)$
is not possible, quasi-interpolation operators are not fully local.
Indeed, the value of dofs needs to be obtained through some suitable averaging,
meaning that the definition $(\CI_h u)|_K$ will not only depend on $u|_K$, but rather on
$u|_{\oK}$, where $\oK$ is the domain formed by other mesh elements surrounding $K$. Although
this is not a huge drawback in practice, this fact makes the error analysis more complicated
than for standard interpolation operators, since it is not possible to use arguments involving
a single ``reference element''. As a result, a shape-regularity requirement involving $\oK$
rather than each element $K$ individually often appears. In fact, the only fully-explicit
approximation results we are aware of in this direction
are~\cite{carstensen_funken_2000a,verfurth_1999a,Vees_Verf_expl_res_09,Vees_Verf_Poin_stars_12}
but the scope of these results is limited,
as they are specifically established with applications to a posteriori error estimation in mind.

Another important topic related to our work and connected to quasi-interpolation
operators is the comparison between local-best and global-best approximations.
Namely, we ask whether there exists a constant $C > 0$ such that
\begin{equation}
\label{eq_glob_loc_intro}
\min_{u_h \in \CP_p(\CT_h) \cap H^1_0(\Omega)} \|\grad(u-u_h)\|_\Omega
\leq
C \min_{v_h \in \CP_p(\CT_h)} \|\bgrad(u-v_h)\|_\Omega.
\end{equation}
In other words, we ask whether, up to a generic constant, the conforming
finite element approximation is as good as the element-wise
broken polynomial approximation of the target function. It is in fact clear
that such a constant exists, because the left-hand side vanishes whenever
the right-hand side does. However, the dependence of the constant $C$ on
key discretization parameters is a subtle issue.
Following~\cite{Aur_Fei_Kem_Pag_Praet_loc_glob_13,Cars_Pet_Sched_comp_FEs_12,veeser_2016a},
recent results in this direction are given in~\cite{Gaw_Holst_Licht_loc_approx_FEEC_21,licht_2023a,Voh_loc_glob_H1_25}.
Notice that once an inequality such as~\eqref{eq_glob_loc_intro} is established, approximability
estimates for the finite element space $\CP_p(\CT_h) \cap H^1_0(\Omega)$ follow,
since the right-hand side may be easily estimated in terms of (broken) Sobolev
norms of $u$ using element-wise Poincar\'e inequalities (see Section
\ref{section_local_best} below).

In this work, we propose a new quasi-interpolation operator
$\CJ_h^p: H^1_0(\Omega) \to \CP_p(\CT_h) \cap H^1_0(\Omega)$.
This operator can be computed locally by solving patch-wise
finite element projection problems,
works for any polynomial degree $p \geq 1$, preserves boundary conditions,
is a projection, and, moreover, satisfies
\begin{equation}
\label{eq_error_intro}
\|\grad(u-\CJ_h^p(u))\|_\Omega
\leq
C \min_{v_h \in \CP_p(\CT_h)} \|\bgrad(u-v_h)\|_\Omega
\qquad
\forall u \in H^1_0(\Omega)
\end{equation}
for a constant $C$ only depending on the shape-regularity parameter
of the mesh $\CT_h$, the dimension $d$, and the polynomial degree $p$. In particular,
our interpolation operator has the approximation power of discontinuous
piecewise polynomials and leads to the local--global equivalence~\eqref{eq_glob_loc_intro}.
We also establish localized versions of~\eqref{eq_error_intro}, as well as error estimates in
the $L^2(\Omega)$ norm (see Section~\ref{section_main_results} below).
Our construction is inspired by the so-called potential reconstructions used in the context of
a posteriori error estimation for nonconforming and mixed finite element methods~\cite{ern_vohralik_2015a}, as well as recent works of the authors on commuting quasi-interpolation
operators~\cite{chaumontfrelet_vohraik_2022a,ern_gudi_smears_vohralik_2022a}.

The main novelty of our work is that the constant $C$ appearing in~\eqref{eq_error_intro}
is fully computable. Specifically, it is calculated through an algorithm that
amounts to solving small, uncoupled, matrix eigenvalue problems that stem
from patch-wise finite element spaces. We also establish localized versions
of~\eqref{eq_error_intro} and $L^2(\Omega)$ error estimates with fully-computable constants.
To the best of our knowledge, this work is the first to provide computable constants
under minimal regularity, for arbitrary polynomial degree, and yielding
the comparison of global-best and local-best errors.
We emphasize that the proposed algorithm computes the generic constant $C$
in~\eqref{eq_error_intro} valid for all $u \in H^1_0(\Omega)$ (without needing to know $u$).

The remainder of this work is organized as follows. Section~\ref{section_settings}
gives the setting and recalls key preliminary results. 
In Section~\ref{section_quasi_interpolation}, we present elementwise Lagrange interpolation,
the local-best approximation, and motivate our approach. The construction of our
quasi-interpolation operator and statement of its key properties including~\eqref{eq_error_intro}
forms the content of Section~\ref{section_main_results}.
Finally, Section~\ref{section_proof} collects the proofs and Section~\ref{sec_num} reports on
the actual behavior of our quasi-interpolation operator in several numerical experiments.

\section{Meshes, spaces, patches, and Poincar\'e inequality}
\label{section_settings}

\subsection{Computational mesh}\label{sec_mesh}

We consider an open, bounded, connected, Lipschitz polyhedral domain $\Omega\subset \mathbb R^d$,
$d \geq 1$, partitioned into a mesh $\CT_h$ that consists of (open) simplicial elements $K$.
We assume that the mesh is matching in the sense that the intersection $\overline{K_+} \cap \overline{K_-}$
of two elements $K_\pm \in \CT_h$ is either empty or a single common $d'$-dimensional
subsimplex of $\overline{K_+}$ and $\overline{K_-}$, $0 \leq d' \leq d-1$
(a single vertex, edge, or face of $K_-$ and $K_+$ if $d=3$). This assumption is standard
(see, e.g., \cite[Section 2.2]{ciarlet_2002a} or \cite[Definition 6.11]{Ern_Guermond_FEs_I_21})
We denote by $\CV_h$ the set of vertices of $\CT_h$.

For a simplex $K \subset \mathbb R^d$,
$h_K$ denotes the diameter of $K$ and $\rho_K$ the diameter 
of the largest ball contained in $\overline{K}$.
The shape-regularity parameter of $K$ is then defined by 
\begin{equation*}
% \label{eq_shape_reg}
\kappa_K \eq \frac{h_K}{\rho_K}.
\end{equation*}
If $\CT \subset \CT_h$ is a submesh, the shape-regularity parameter of $\CT$ is
$\kappa_{\CT} \eq \max_{K \in \CT} \kappa_K$. Notice that $\kappa_K \geq 1$
and that it is possible to design strongly graded meshes $\CT_h$ while maintaining
$\kappa_{\CT_h}$ bounded, as long as the elements remain isotropic.

If $K \subset \mathbb R^d$ is a simplex, we employ the notation $\CF_K$ for its
$(d-1)$-dimensional faces and $\CV_K$ for its vertices.
Then, if $\ba \in \CV_K$ is a vertex of $K$, the notation $\haK$ stands for the
distance between $\ba$ and the (hyper)plane generated by the face of $K$ opposite to $\ba$. 
Notice that $\rho_K \leq \haK \leq h_K$, so that in particular $h_K/\haK \leq \kappa_K$.

\subsection{Sobolev spaces}

Throughout this manuscript, if $\omega \subset \Omega$ is an open, bounded, connected,
Lipschitz subset of $\Omega$, then
$L^2(\omega)$ is the Lebesgue space of scalar-valued square-integrable functions and
$\BL^2(\omega) \eq [L^2(\omega)]^d$ is the space of vector-valued square-integrable functions.
We denote by $(\cdot,\cdot)_\omega$ and $\|\cdot\|_\omega$ the inner products and norms
of both spaces. 
We employ the notation $L^\infty(\omega)$ and $\BL^\infty(\omega)$ for
essentially bounded scalar- and vector-valued functions, and denote by
$\|{\cdot}\|_{L^\infty(\omega)}$ and $\|{\cdot}\|_{\BL^\infty(\omega)}$ their usual norms.

The Sobolev spaces
\begin{equation*}
H^1(\omega) \eq \left \{
v \in L^2(\omega) \; | \; \grad v \in \BL^2(\omega)
\right \},
\quad
\BH(\ddiv,\omega) \eq \left \{
\bw \in \BL^2(\omega) \; | \; \div \bw \in L^2(\omega)
\right \},
\end{equation*}
where $\grad$ and $\div$ respectively denote the gradient and divergence operators
defined in the sense of distributions, will be useful.
We will also employ the following notation:
\begin{equation*}
H^1_0(\omega) \eq \left \{
v \in H^1(\omega) \; | \; v = 0 \text{ on } \partial \omega
\right \}, \quad
\BH(\ddiv^0,\omega) \eq \left \{
\vv \in \BH(\ddiv,\omega) \; | \; \div \vv = 0
\right \},
\end{equation*}
where the boundary value is understood in the trace sense.

For scalar-valued functions, we will also use higher-order Sobolev spaces.
Namely for an integer $s \in \mathbb N$, $H^s(\omega)$ stands for 
the space of functions $v \in L^2(\omega)$ such that
\begin{equation*}
\partial_{\boldsymbol \alpha} v \in L^2(\omega)
\end{equation*}
for all $\boldsymbol \alpha \in \mathbb N^d$ with
$|\boldsymbol \alpha|_1 \leq s$, whereby $|{\cdot}|_1$
denotes the $\ell_1$ norm on $\mathbb N^d$. We equip $H^s(\omega)$
with the seminorm
\begin{equation}\label{eq_Hs_sn}
|v|_{H^s(\omega)}^2
\eq
\sum_{\substack{\boldsymbol \alpha \in \mathbb N^d \\ |\boldsymbol \alpha|_1 = s}}
\|\partial_{\boldsymbol \alpha} v\|_{\omega}^{2}
\qquad
\forall v \in H^s(\omega).
\end{equation}

The broken Sobolev spaces
\begin{equation*}
H^s(\CT_h) = \left \{
v \in L^2(\Omega) \; | \; v|_K \in H^s(K) \quad \forall K \in \CT_h
\right \}
\end{equation*}
will also be useful. The broken gradient is in particular defined element-wise as
\begin{equation*}
\left (\bgrad v\right )|_K \eq \grad(v|_K) \quad \forall K \in \CT_h
\end{equation*}
for all $v \in H^1(\CT_h)$. It maps $H^1(\CT_h)$ into $\BL^2(\Omega)$.

\subsection{Finite element spaces}

If $K \subset \mathbb R^d$ is a simplex and $q \geq 0$, we denote by $\CP_{q}(K)$ the set
of polynomial functions of total degree at most $q$ on $K$ and
$\BCP_{q}(K) \eq [\CP_{q}(K)]^d$.
The set of Raviart--Thomas polynomials on $K$ is then
$\RT_{q}(K) \eq \bx \CP_{q}(K) + \BCP_{q}(K)$,
see \cite{nedelec_1980a,raviart_thomas_1977a}.

If $\CT$ is a set of simplices with the corresponding domain $\omega \subset \Omega$,
then $\CP_{q}(\CT)$ collects the functions $v: \omega \to \mathbb R$ such that
$v|_K \in \CP_{q}(K)$ for all $K \in \CT$. The elementwise (broken) space $\RT_{q}(\CT)$
is defined analogously.

Below, we fix a polynomial degree $p \geq 1$.

\subsection{Hat functions}

Throughout this work, the hat functions $\{\pa\}_{\ba \in \CV_h}$
will play an important role. For each mesh vertex $\ba \in \CV_h$, $\pa$
is the only element of $\CP_1(\CT_h) \cap H^1(\Omega)$ such that
$\pa(\bb) = \delta_{\ba,\bb}$ for all $\bb \in \CV_h$, where $\delta$ is the Kronecker symbol.
If $K \in \CT_h$ and $\ba \in \CV_K$, then
\begin{equation}
\label{eq_pa}
\|\pa\|_{L^\infty(K)} = 1
\qquad
\|\grad \pa\|_{\BL^\infty(K)}
=\frac{1}{\haK}.
\end{equation}
We also denote by $\oa$ the open domain corresponding to the support of $\pa$.
Crucially, the hat functions form a partition of unity as
\begin{equation}
\label{eq_PU}
\sum_{\ba \in \CV_h} \pa = 1.
\end{equation}
%%%  The definitions
%%%  $\Ga \eq \{ \bx \in \overline{\oa}; \pa(\bx) = 0 \} \subset \partial \oa$
%%%  % and $\Gac \eq \partial \oa \setminus \Ga$
%%%  will also be useful.
%%%  Notice that when $\ba \in \Omega$ is an interior vertex, then $\Ga = \partial \oa$,
%%%  whereas $\partial \oa \setminus \Ga \subset \partial \Omega$ is non-empty if
%%%  $\ba \in \partial \Omega$.

\subsection{Vertex patches}

If $\ba \in \CV_h$ we define the patch
\begin{equation*}
% \label{eq_patch}
\CTa = \left \{ K \in \CT_h \; | \; \ba \in \CV_K \right \},
\end{equation*}
of elements having $\ba$ as a vertex. The elements $K \in \CTa$ correspond
to the support $\oa$ of the hat function $\pa$.
For a vertex patch $\CTa$, we define
\begin{equation}
\label{eq_ha}
h_{\ba} \eq \max_{K \in \CT_h | \ba \in \CV_K} h_K.
\end{equation}

\subsection{Poincar\'e inequality}

Since simplices are convex, for all $K \in \CT_h$ and for all $v \in H^1(K)$
such that $(v,1)_K = 0$, we have
\begin{equation}
\label{eq_poincare}
\|v\|_K \leq \frac{h_K}{\pi} \|\grad v\|_K,
\end{equation}
see, e.g., \cite{bebendorf_2003a,payne_weinberger_1960a}.

\section{Elementwise Lagrange interpolation, local-best approximation, and main ideas}
\label{section_quasi_interpolation}

In this section, we present the elementwise Lagrange interpolation operator and discuss
local-best approximation by $H^1$ seminorm elementwise orthogonal projection. We then
motivate the construction of our quasi-interpolation operator.

\subsection{Elementwise Lagrange interpolation operator}
\label{sec_Ip}

We start by recalling the standard Lagrange interpolation operator on each mesh element
$K \in \CT_h$. We will only apply it to functions that are polynomials of degree $p+1$
on $K$, so that we have $\CI^p_K: \CP_{p+1}(K) \to \CP_p(K)$,
\begin{subequations}\label{eq_Ip}
\begin{equation}\label{eq_Ip_K}
(\CI^p_K v_h) (\bx_\ell) = v_h (\bx_\ell)
\end{equation}
for all Lagrange interpolation nodes $\{\bx_\ell\}_\ell$ on $K$ see, e.g.,
\cite[Section~7.4]{Ern_Guermond_FEs_I_21}. Then, the elementwise Lagrange interpolation operator
$\bII^p: \CP_{p+1}(\CT_h) \to \CP_p(\CT_h)$ is defined by
\begin{equation}\label{eq_Ip_Th}
\left .\left (\bII^p v\right )\right |_K \eq \CI_K^p(v|_K) \quad \forall K \in \CT_h,
\end{equation}\end{subequations}
for $v \in \CP_{p+1}(\CT_h)$. Let us point out that if
$v \in \CP_{p+1}(\CT_h) \cap H^1_0(\Omega)$, then
$\bII^p v \in \CP_p(\CT_h) \cap H^1_0(\Omega)$ and that $\bII^p$ is a projection.

\begin{remark}[Other interpolation operators]
We focus on the Lagrange interpolation operator $\CI_K^p$ on each mesh element $K$
for the sake of simplicity.
In fact, any affine-equivalent interpolation operator $\CI_K^p: \CP_{p+1}(K) \to \CP_p(K)$ with
the property that $\bII^p$ maps $\CP_{p+1}(\CT_h) \cap H^1_0(\Omega)$ into
$\CP_p(\CT_h) \cap H^1_0(\Omega)$ can be used.
\end{remark}

\subsection{Local-best approximation}
\label{section_local_best}

Another key ingredient of our quasi-interpolation operator is the so-called
``local-best'' approximation.
For $K \in \CT_h$ and $v \in H^1(K)$, we denote by $\pi_K^p v \in \CP_p(K)$
the $H^1$-orthogonal projection of $v$. Specifically, it is defined as the
only element of $\CP_p(K)$ such that $(\pi_K^p v,1)_K = (v,1)_K$ and
\begin{subequations}
\label{eq_LB}
\begin{equation}
\label{eq_LB_K}
(\grad (\pi_K^p v),\grad q_h)_K = (\grad v,\grad q_h)_K \qquad \forall q_h \in \CP_p(K).
\end{equation}

The ``local-best'' approximation of $v \in H^1(\CT_h)$, $\bpi^p v \in \CP_p(\CT_h)$,
is then defined by setting 
\begin{equation}
\label{eq_LB_T}
(\bpi^p v)|_K \eq \pi_K^p (v|_K) \qquad \forall K \in \CT_h.
\end{equation}\end{subequations}
Notice that then
\begin{equation*}
\|\bgrad(v-\bpi^p v)\|_{K}
=
\min_{v_h \in \CP_p(K)} \|\bgrad(v-v_h)\|_{K}.
\end{equation*}
and
\begin{equation*}
\|\bgrad(v-\bpi^p v)\|_{\Omega}
=
\min_{v_h \in \CP_p(\CT_h)} \|\bgrad(v-v_h)\|_{\Omega}.
\end{equation*}
We stress that, in general, $\bpi^p v$ is discontinuous, i.e., $\bpi^p v \not \in H^1_0(\Omega)$
even if $v \in H^1_0(\Omega)$.

\subsection{Main idea of the construction of the quasi-interpolation operator}
\label{section_main_ideas}

Here, we explain the spirit of the definition of our operator.
Given $u \in H^1_0(\Omega)$, we start by considering the local-best approximation
$\bpi^p u \in \CP_p(\CT_h)$ of $u$ given by~\eqref{eq_LB}. This function is locally
defined and exhibits the best-possible approximation properties, but it is unfortunately
nonconforming since $\bpi^p u \notin H^1_0(\Omega)$ in general.

The next stage is to then to somehow ``locally project'' $\bpi^p u$ to the conforming Lagrange
finite element space $\CP_p(\CT_h) \cap H^1_0(\Omega)$. The optimal projection
\begin{equation}
\label{eq_global_best_bad}
\min_{v_h \in \CP_p(\CT_h) \cap H^1_0(\Omega)} \|\bgrad(\bpi^p u-v_h)\|_\Omega
=
\min_{v_h \in \CP_p(\CT_h) \cap H^1_0(\Omega)} \|\grad(u-v_h)\|_\Omega
\end{equation}
is not satisfactory for a quasi-interpolation operator since it is global: it
requires a global system solve and does not lead to local approximation properties (the
minimizer from~\eqref{eq_global_best_bad} restricted to $K \in \CT_h$
depends on the values of $u$ in the whole domain $\Omega$ in general). The key idea is
therefore to first localize $\bpi^p u$ and then project it locally in vertex patches.

We employ the partition of unity by the hat functions~\eqref{eq_PU} and try to
approximate $\pa \bpi^p u$ into a local contribution $s_h^{\ba} \in \CP_p(\CTa) \cap H^1_0(\oa)$.
This choice makes sense, since (a) $\pa \bpi^p u$ is expected to be close to $\pa u$
which sits in $H^1_0(\oa)$ and (b) the boundary conditions ensure that the object
resulting from the summation of the local contributions is globally conforming.

At this point, it is therefore tempting to define
\begin{equation}
\label{eq_bad_local_approximation}
s_h^{\ba} \eq \arg \min_{v_h \in \CP_p(\CTa) \cap H^1_0(\oa)}
\|\bgrad(\pa \bpi^p u - v_h)\|_\oa, \quad
\CJ_h^{p}(u) \eq \sum_{\ba \in \CV_h} s_h^\ba.
\end{equation}
The construction in~\eqref{eq_bad_local_approximation} faithfully conveys
the ideas behind the construction of our projector. However, as we shall
see later, \eqref{eq_bad_local_approximation} would fail because we are
trying to represent a polynomial of degree $p+1$ with a polynomial of
degree $p$. Indeed, the presence of the hat function $\pa$ increases
the polynomial degree by $1$, and to bring it back, we will use the elementwise
Lagrange interpolation operator \eqref{eq_Ip}.

\subsection{Main idea for the computable approximation constant}
\label{section_ideas_cnst}

The local problem~\eqref{eq_bad_local_approximation} is an approximation of
the piecewise polynomial but discontinuous (nonconforming) datum $\pa \bpi^p u$
by the conforming finite element method. Our key idea to obtain a computable
error bound is to employ duality, i.e., define a local counterpart of
problem~\eqref{eq_bad_local_approximation} via Raviart--Thomas finite elements. 
This will allow to measure the nonconformity of the local-best approximation $\bpi^p u$
vertex patch by vertex patch via a local discrete eigenvalue problem and bring us
to the precision of the $H^1$-orthogonal projection of~\eqref{eq_LB}. This projection
is defined separately on each mesh element and its approximation bound comes with a
fully explicit, computable, constant, see~\eqref{eq_repeated_poincare} below.
Combining these ingredients then gives estimates optimal in the mesh size $h$
for both the $H^1$ seminorm and the $L^2$ norm with a fully computable constant.

\section{Main results}\label{section_main_results}

In this section, we construct our quasi-interpolation operator and present its fully
computable error bounds. The proofs are postponed until Section~\ref{section_proof}. 

\subsection{Local potential reconstruction}

Let a vertex $\ba \in \CV_h$ be given. We develop further~\eqref{eq_bad_local_approximation}.
As it will become apparent below, we need here to start from an arbitrary piecewise polynomial
$u_h \in \CP_p(\CTa)$. We define $s_h^{\ba}(u_h)$ as the continuous piecewise polynomial with
vanishing trace on the boundary $\partial \oa$ of the vertex patch subdomain $\oa$,
$s_h^{\ba}(u_h) \in \CP_p(\CTa) \cap H^1_0(\oa)$, by
\begin{subequations}\label{eq_sha}\begin{equation}
\label{eq_sha_min}
s_h^{\ba}(u_h) \eq \arg \min_{v_h \in \CP_p(\CTa) \cap H^1_0(\oa)}
\|\bgrad(\bII^p(\pa u_h)-v_h)\|_\oa.
\end{equation}
Equivalently, $s_h^{\ba}$ is given by the Euler--Lagrange conditions of~\eqref{eq_sha_min}, reading as: find $s_h^{\ba}(u_h) \in \CP_p(\CTa) \cap H^1_0(\oa)$ such that
\begin{equation}
\label{eq_sha_EL}
(\grad s_h^\ba(u_h),\grad v_h)_\oa = (\bgrad (\bII^p(\pa u_h)),\grad v_h)_\oa
\qquad
\forall v_h \in \CP_p(\CTa) \cap H^1_0(\oa).
\end{equation}\end{subequations}
%We emphasize that given $u_h \in \CP_p(\CT_h)$, it is always possible to
%compute $s_h^\ba(u_h)$ by solving a patch-wise finite element problem.
Since $\|\grad{\cdot}\|_\oa$ is a norm on $H^1_0(\oa)$, $s_h^{\ba}(u_h)$ is indeed uniquely defined.
We observe that the Lagrange interpolation operator $\bII^p$ of~\eqref{eq_Ip} is employed
element-wise in order to reduce the polynomial degree $p+1$ of $\pa u_h$ back to $p$. This 
is legal, since $\pa u_h$ is a polynomial (it would have failed if immediately applied
to $\pa u$). We implicitly extend $s_h^\ba(u_h)$ by zero outside of $\oa$, so that 
$s_h^\ba$ is an operator from $\CP_p(\CTa)$ to $\CP_p(\CT_h) \cap H^1_0(\Omega)$.

\subsection{Local flux reconstruction}

For each vertex $\ba \in \CV_h$, consider the local finite element space
\begin{equation} \label{eq_Wha}
\BW^{\ba}_h \eq \RT_p(\CTa) \cap \BH(\ddiv^0,\oa)
\end{equation}
of divergence-free $\BH(\ddiv,\oa)$-conforming Raviart--Thomas piecewise polynomials
defined over the vertex patch subdomain $\oa$.
For an arbitrary piecewise polynomial $u_h \in \CP_p(\CTa)$, we define
$\br_h^\ba(u_h)$ as the only element of $\BW_h^{\ba}$ such that
\begin{subequations}\label{eq_bra}
\begin{equation}
\label{eq_bra_min}
\br_h^\ba(u_h) \eq \arg \min_{\bw_h \in \BW_h^\ba}
\|\bgrad(\pa u_h)-\bw_h\|_\oa,
\end{equation}
or, equivalently, $\br_h^\ba(u_h) \in \BW_h^{\ba}$ such that
\begin{equation}
\label{eq_bra_EL}
(\br_h^\ba(u_h),\bw_h)_\oa = (\bgrad(\pa u_h),\bw_h)_\oa \qquad \forall \bw_h \in \BW_h^\ba.
\end{equation}\end{subequations}
Notice that we then have
\begin{equation}
\label{eq_bra_sup}
\|\br_h^\ba(u_h)\|_\oa
=
\max_{\substack{\bw_h \in \BW_h^\ba \\ \|\bw_h\|_\oa = 1}}
(\bgrad(\pa u_h),\bw_h)_\oa.
\end{equation}
From a practical standpoint, given $u_h \in \CP_p(\CTa)$, $\br_h^{\ba}(u_h)$
can be computed as $\br_h^{\ba}(u_h) = \br_h$, where
$(\br_h,q_h) \in \BX_h^{\ba} \times Y_h^{\ba}$ is the unique pair such that
\begin{equation}
\label{eq_bra_lagrange_multiplier}
\left \{
\begin{array}{rcl}
(\br_h,\bv_h)_\oa + (q_h,\div \bv_h)_{\oa} &=& (\bgrad(\pa u_h),\bv_h)_\oa,
\\
                    (\div \br_h,p_h)_{\oa} &=& 0,
\end{array}
\right .
\end{equation}
for all $(\bv_h,p_h) \in \BX_h^{\ba} \times Y_h^{\ba}$, with
\begin{equation*}
\BX_h^{\ba} \eq \RT_p(\CTa) \cap \BH(\ddiv,\oa),
\qquad
Y_h^{\ba} \eq \CP_p(\CTa).
\end{equation*}
Indeed, although~\eqref{eq_bra} and~\eqref{eq_bra_lagrange_multiplier} are equivalent,
in contrast to $\BW_h^{\ba}$, both $\BX_h^{\ba}$ and $Y_h^{\ba}$ can be easily equipped
with an explicit basis. 

\subsection{A local primal--dual quotient and its computation by a local matrix eigenvalue problem}

Consider a vertex $\ba \in \CV_h$. For $u_h \in \CP_p(\CTa)$,
let $s_h^{\ba}(u_h)$ be defined by~\eqref{eq_sha} and
$\br_h^{\ba}(u_h)$ by~\eqref{eq_bra}. We define the primal--dual quotient
\begin{equation}\label{eq_lmbd}
\la \eq \max_{u_h \in \CP_p(\CTa)}
\frac{\|\bgrad(\bII^p(\pa u_h)-s_h^\ba(u_h))\|_\oa}{\|\br^\ba(u_h)\|_\oa}.
\end{equation}
Theorem~\ref{thm_la} below warrants that $\la$ is finite and only depends
on the polynomial degree $p$, the space dimension $d$, and the shape-regularity
parameter $\kappa_{\CTa}$ of the patch.

The number $\la$ can be computed through a small-size matrix eigenvalue
problem. Specifically, given a basis of $\CP_p(\CTa)$, since the operator
\begin{equation*}
u_h \in \CP_p(\CTa) \to \CP_p(\CTa) \ni \bII^p(\pa u_h) - s_h^{\ba}(u_h)
\end{equation*}
is linear, its action may be encoded as a matrix
$\TD \in \mathbb R^{|\CP_p(\CTa)| \times |\CP_p(\CTa)|}$.
Similarly, the action of the operator
$\br_h^{\ba}: \CP_p(\CTa) \to \BW_h^{\ba} \subset \BX_h^{\ba}$
can be easily encoded as a simple matrix $\TR \in \mathbb R^{|\CP_p(\CTa)| \times |\BX_h^{\ba}|}$
after inverting the matrix representation of the left-hand side of
\eqref{eq_bra_lagrange_multiplier}.
Then, if $\TM$ and $\TK$
respectively denote the mass and stiffness matrices of
$\BX_h^{\ba} = \RT_p(\CTa) \cap \BH(\ddiv,\oa)$
and $\CP_p(\CTa)$, $\la$ can be obtained as the largest eigenvalue
of the problem: Find $\mu \in \mathbb R$ and ${\textup v} \in \mathbb R^{|\CP_p(\CTa)|}$
such that
\begin{equation*}
\TD^{\rm T} \TK \TD {\textup v} = \mu \TR^{\rm T} \TM \TR {\textup v}.
\end{equation*}

\subsection{A local mesh shape-regularity characterization}

Recall the notation of Section~\ref{sec_mesh}. For a vertex $\ba \in \CV_h$, define the constant
\begin{equation}\label{eq_ra}
\ra
\eq
1 + \frac{1}{\pi} \max_{K \in \CTa} \frac{h_K}{\haK}.
\end{equation}
It is fully computable and only depends on the shape-regularity parameter
$\kappa_{\CTa}$ of $\CTa$.

\subsection{Local-best approximation error estimate}

For the local-best approximation of~\eqref{eq_LB}, we observe that,
applying the Poincar\'e inequality stated in~\eqref{eq_poincare} repeatedly,
see~\cite[Corollary~12.13]{Ern_Guermond_FEs_I_21} or~\cite[Theorem~1.8.1]{Voh_FEM_LN_25}
for details,
\begin{equation}
\label{eq_repeated_poincare}
\|\bgrad(v-\bpi^p v)\|_{K}^2
\leq (s+1)!
\left (
\frac{h_K}{\pi}
\right )^{2s}
|v|_{H^{1+s}(K)}^2
\quad
\forall v \in H^{1+s}(K), \, \forall K \in \CT_h
\end{equation}
for all integer $0 \leq s \leq p$. Crucially, the constant appearing in~\eqref{eq_repeated_poincare} is fully explicit (computable).

\begin{remark}[Uniform-in-$s$ constant for a redefined Sobolev seminorm]
\label{rem_Hs}
If one redefines the seminorm $|v|_{H^s(\omega)}$ from~\eqref{eq_Hs_sn} to repetitively
contain the indices, starting with
$|v|_{H^2(\omega)} = \sum_{i=1}^d $ $\sum_{j=1}^d \|\partial_{\bx_j} \partial_{\bx_i} v\|_K^2$ in
place of $|v|_{H^2(\omega)} = \sum_{i=1}^d \sum_{j=i}^d \|\partial_{\bx_j} \partial_{\bx_i} v\|_K^2$,
then the factor $(s+1)!$ from~\eqref{eq_repeated_poincare} can be removed.
\end{remark}

\subsection{The quasi-interpolation operator $\CJ_h^p$ } \label{sec_int}

Let $u \in H^1_0(\Omega)$. To define our quasi-interpolation operator,
we employ~\eqref{eq_sha_min} with the argument $u_h$ being the local-best
approximation $\bpi^p u$ of the target function $u$ given by~\eqref{eq_LB},
restricted to the patch subdomain $\oa$:
\begin{equation}
\label{eq_definition_jj}
\CJ_h^p u \eq \sum_{\ba \in \CV_h} s_h^\ba((\bpi^p u)|_\oa).
\end{equation}
Since the computations of $\bpi^p u$ and $s_h^\ba((\bpi^p u)|_\oa)$
amount to element-wise/patch-wise finite element solves, $\CJ^p_h u$ is computable. This operator
is also local in the sense that for any element $K \in \CT_h$,
$\left . \left (\CJ_h^p u\right ) \right |_K$ only depends on the values
of $u$ in the vertex patch subdomains $\oa$ for the vertices $\ba \in \CV_K$.

\subsection{$H^1$ seminorm error estimate for $\CJ_h^p$ with computable constants}

We now state our main result on the approximation error of $\CJ_h^p u$ in the $H^1$ seminorm
(energy norm). We present it in a Pythagoras form that we are lead to by the orthogonal projection
property~\eqref{eq_LB}. Computable quasi-interpolation estimates follow from combination
with~\eqref{eq_repeated_poincare}. 

\begin{theorem}[$H^1$ seminorm error estimate with computable constants]
\label{thm_error_H1}
Let $u \in H^1_0(\Omega)$ and let the quasi-interpolation operator $\CJ_h^p$
be given by~\eqref{eq_LB}, \eqref{eq_sha}, \eqref{eq_definition_jj}. Then
\begin{align}
\label{eq_error_la_K}
\|\grad(u-\CJ_h^{p}(u))\|_K
&\leq
\left\{\|\bgrad(u-\bpi^p u)\|_K^{2}
+ \left(\sum_{\ba \in \CV_K} \ra \la \|\bgrad(u-\bpi^p u)\|_\oa\right)^{2}\right\}^{1/2}
\\
\nonumber
&\leq
\left(\frac{1}{(d+1)}+(d+1) c_K^{2}\right)^{1/2} \left\{\sum_{\ba \in \CV_K}\|\bgrad(u-\bpi^p u)\|_\oa^2\right\}^{1/2} \qquad \forall K \in \CT_h,
\end{align}
with the computable constant
\begin{equation*}
%% c_K \eq 1 + \sqrt{d+1} \max_{\ba \in \CV_K} \ra \la
c_K \eq \max_{\ba \in \CV_K} ( \ra \la ).
\end{equation*}
There also holds
\begin{equation}
\label{eq_error_la_global}
\|\grad(u-\CJ_h^p u)\|_\Omega
\leq
(1+(d+1)^2c_\Omega^{2})^{1/2}\|\bgrad(u-\bpi^p u)\|_\Omega
\end{equation}
with the computable constant
\begin{equation*}
c_\Omega
\eq
\max_{\ba \in \CV_h} (\ra \la).
\end{equation*}
\end{theorem}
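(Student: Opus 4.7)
The plan is to exploit two structural properties: the hat-function partition of unity \eqref{eq_PU}, which gives $u = \sum_{\ba \in \CV_K} \pa u$ on every $K \in \CT_h$, and the elementwise orthogonality \eqref{eq_LB_K} defining $\bpi^p u$. On each $K \in \CT_h$ I would expand
\begin{equation*}
u - \CJ_h^p u = \sum_{\ba \in \CV_K} \bigl(\pa u - s_h^\ba(\bpi^p u)\bigr)
\end{equation*}
and split each summand by the telescoping identity
\begin{equation*}
\pa u - s_h^\ba(\bpi^p u) = \pa(u - \bpi^p u) + \bigl(\pa \bpi^p u - \bII^p(\pa \bpi^p u)\bigr) + \bigl(\bII^p(\pa \bpi^p u) - s_h^\ba(\bpi^p u)\bigr).
\end{equation*}
Summing over $\ba \in \CV_K$, the first block collapses to $u - \bpi^p u$ on $K$. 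The crucial observation is that the second block vanishes: since $\sum_{\ba \in \CV_K}\pa \bpi^p u = \bpi^p u|_K$ already lies in $\CP_p(K)$, the local Lagrange interpolant $\bII^p$ reproduces it. Hence $u - \CJ_h^p u = (u - \bpi^p u) + z_K$ on $K$, where $z_K \eq \sum_{\ba \in \CV_K}\bigl(\bII^p(\pa \bpi^p u) - s_h^\ba(\bpi^p u)\bigr)|_K \in \CP_p(K)$.

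Because $z_K \in \CP_p(K)$, property \eqref{eq_LB_K} of $\bpi^p$ gives $(\grad(u-\bpi^p u), \grad z_K)_K = 0$, so Pythagoras yields
\begin{equation*}
\|\grad(u-\CJ_h^p u)\|_K^2 = \|\grad(u-\bpi^p u)\|_K^2 + \|\grad z_K\|_K^2.
\end{equation*}
To bound the individual summands of $z_K$, I would invoke the definition \eqref{eq_lmbd} of $\la$: $\|\bgrad(\bII^p(\pa \bpi^p u) - s_h^\ba(\bpi^p u))\|_\oa \leq \la \|\br_h^\ba(\bpi^p u)\|_\oa$. The dual norm is then controlled via \eqref{eq_bra_sup}: for any $\bw_h \in \BW_h^\ba$, since $\bw_h$ is divergence-free with vanishing normal trace on $\partial \oa$ and $\pa u \in H^1_0(\oa)$, integration by parts gives $(\grad(\pa u), \bw_h)_\oa = 0$, hence $(\bgrad(\pa \bpi^p u), \bw_h)_\oa = (\bgrad(\pa(\bpi^p u - u)), \bw_h)_\oa$. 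Expanding via the product rule and combining \eqref{eq_pa} with the elementwise Poincar\'e inequality \eqref{eq_poincare} applied to $u - \bpi^p u$ (zero-mean on each $K$ by construction of $\bpi^p$) produces exactly the constant $\ra$ from \eqref{eq_ra}, that is, $\|\br_h^\ba(\bpi^p u)\|_\oa \leq \ra \|\bgrad(u - \bpi^p u)\|_\oa$.

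The local estimate \eqref{eq_error_la_K} then follows from a triangle inequality on the (at most $d+1$) vertices of $K$ together with the trivial bound $\|\cdot\|_K \leq \|\cdot\|_\oa$. Its second form is obtained by discrete Cauchy--Schwarz and the averaging bound $\|\bgrad(u-\bpi^p u)\|_K^2 \leq \tfrac{1}{d+1}\sum_{\ba \in \CV_K}\|\bgrad(u-\bpi^p u)\|_\oa^2$. For the global bound \eqref{eq_error_la_global}, I would sum the Pythagorean identity over $\CT_h$; orthogonality persists globally because the object $Z \eq \sum_{\ba \in \CV_h}\bigl(\bII^p(\pa \bpi^p u) - s_h^\ba(\bpi^p u)\bigr)$ lies in $\CP_p(\CT_h) \cap H^1_0(\Omega)$ with $Z|_K = z_K$. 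I then close using two finite-overlap facts: for functions $f_\ba$ supported in $\oa$,
\begin{equation*}
\Bigl\|\sum_{\ba \in \CV_h} \grad f_\ba\Bigr\|_\Omega^2 \leq (d+1) \sum_{\ba \in \CV_h} \|\grad f_\ba\|_\oa^2,
\end{equation*}
and the identity $\sum_{\ba \in \CV_h}\|\bgrad(u-\bpi^p u)\|_\oa^2 = (d+1)\|\bgrad(u-\bpi^p u)\|_\Omega^2$, both stemming from the fact that every element of $\CT_h$ lies in exactly $d+1$ vertex patches. Together they produce the factor $(d+1)^2 c_\Omega^2$.

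The main obstacle is the algebraic bookkeeping of the three-term decomposition: verifying that the degree-raising caused by multiplication with $\pa$ and the subsequent degree-lowering by $\bII^p$ cancel exactly when summed over $\CV_K$. It is this cancellation that makes the residual $z_K$ a polynomial of degree at most $p$ on $K$ and thereby unlocks the elementwise $H^1$-orthogonality of $\bpi^p u$, which is the engine behind Pythagoras. Once this identity is in hand, the dual bound on $\br_h^\ba$ is essentially a one-line integration by parts exploiting the divergence-free and zero normal trace properties of $\BW_h^\ba$, and the assembly into local and global forms amounts to standard patch counting.
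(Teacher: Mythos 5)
Your proposal follows essentially the same route as the paper: partition-of-unity plus the projection property of $\bII^p$ to identify $\bpi^p u - \CJ_h^p u = \sum_{\ba\in\CV_K}(\bII^p(\pa\bpi^p u) - s_h^\ba)$ as a degree-$p$ polynomial on $K$, then elementwise Pythagoras via \eqref{eq_LB_K}, the bound $\|\br_h^\ba\|_\oa \leq \ra\|\bgrad(u-\bpi^p u)\|_\oa$ obtained from the product rule and Poincar\'e, the primal--dual quotient $\la$, and finite-overlap assembly; you fold the paper's Theorem~\ref{thm_la} (inequality~\eqref{eq_upper_bound_la}) and Lemma~\ref{lem_globloc_patch_la} inline, but the content is identical. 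One small slip worth noting: functions in $\BW_h^\ba = \RT_p(\CTa)\cap\BH(\ddiv^0,\oa)$ do \emph{not} have vanishing normal trace on $\partial\oa$; the boundary term in your integration by parts vanishes only because $\pa u \in H^1_0(\oa)$ (together with $\div\bw_h = 0$ killing the volume term), which you also cite, so the conclusion is unaffected.
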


\subsection{$L^2$ error estimate for $\CJ_h^p$ with computable constants}

We state here our main result on the approximation error of $\CJ_h^p u$ in the $L^2$ norm.
We present it in a triangle-inequality form, since there is no orthogonal projection here.
Computable quasi-interpolation estimates follow from combination with~\eqref{eq_repeated_poincare}. 

\begin{theorem}[$L^2$ error estimate with computable constants]
\label{thm_error_L2}
Let $u \in H^1_0(\Omega)$ and let the quasi-interpolation operator $\CJ_h^p$
be given by~\eqref{eq_LB}, \eqref{eq_sha}, \eqref{eq_definition_jj}. Then
\begin{align}
\label{eq_error_L2_local}
\|u - \CJ_h^p u\|_K
&\leq
\frac{h_K}{\pi} \|\bgrad(u-\bpi^p u)\|_{K}
+
\frac{2}{d}
\sum_{\ba \in \CV_K} \ra\la h_{\ba} \| \bgrad(u-\bpi^p u)\|_{\oa}\\
\\
\nonumber
&
\leq
\left( \frac 1 \pi + \frac{2 c_K}{d}\right) \sum_{\ba \in \CV_K} h_{\ba} \| \bgrad(u-\bpi^p u)\|_{\oa} \qquad \forall K \in \CT_h.
\end{align}
There also holds
\begin{equation}
\label{eq_error_L2_global}
\|u-\CJ_h^p u\|_\Omega
\leq
\left (\frac{1}{\pi \sqrt{d+1}} + \frac{2}{d} \sqrt{d+1} c_\Omega \right )
\left\{\sum_{\ba \in \CV_h} h_{\ba}^2 \|\bgrad( u-\bpi^p u)\|_\oa^2\right\}^{1/2}.
\end{equation}
\end{theorem}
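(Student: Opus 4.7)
The plan is to combine a triangle-inequality split with an elementwise Poincar\'e estimate on $K$ and a patch-level Friedrichs-type inequality on $\oa$, tied together via the partition of unity.

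I decompose $u - \CJ_h^p u = (u - \bpi^p u) + (\bpi^p u - \CJ_h^p u)$. The first piece is handled by the mean-value property $(\bpi^p u, 1)_K = (u, 1)_K$ built into the local-best projection $\pi_K^p$: since $u - \bpi^p u$ has zero mean on the convex simplex $K$, the Poincar\'e inequality~\eqref{eq_poincare} immediately gives
$$
\|u - \bpi^p u\|_K \leq \frac{h_K}{\pi} \|\bgrad(u - \bpi^p u)\|_K,
$$
which is exactly the first summand of~\eqref{eq_error_L2_local}.

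For the second piece, I use $\sum_{\ba \in \CV_K} \pa = 1$ on $K$ together with~\eqref{eq_definition_jj} to write $(\bpi^p u - \CJ_h^p u)|_K = \sum_{\ba \in \CV_K} (\pa \bpi^p u - s_h^\ba(\bpi^p u))|_K$, and then estimate each summand on the full patch $\oa$. The factor $\frac{2 h_\ba}{d}$ in the target bound is produced by a star-shaped-domain Friedrichs inequality $\|w\|_\oa \leq \frac{2 h_\ba}{d} \|\grad w\|_\oa$, obtained by integrating $\div(\bx - \ba) = d$ against $w^2$ to get $d\|w\|_\oa^2 = -2(\bx - \ba, w \grad w)_\oa$ (the boundary term vanishes for $w \in H^1_0(\oa)$) and then invoking Cauchy--Schwarz with $|\bx - \ba| \leq h_\ba$. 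The remaining gradient norm is precisely the conformity defect bounded in the proof of Theorem~\ref{thm_error_H1} by $\ra \la \|\bgrad(u - \bpi^p u)\|_\oa$, via the primal--dual characterization~\eqref{eq_lmbd} of $\la$ and the cancellation $(\grad(\pa u), \bw_h)_\oa = 0$ for $\bw_h \in \BW_h^\ba$ (integration by parts together with $\div \bw_h = 0$ and $\pa u \in H^1_0(\oa)$), which turns the flux $\br_h^\ba(\bpi^p u)$ into a measure of the broken-gradient deviation $\|\bgrad(u - \bpi^p u)\|_\oa$ up to the factor $\ra$ coming from the product-rule bound on $\bgrad(\pa(u - \bpi^p u))$.

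The second inequality in~\eqref{eq_error_L2_local} is immediate from $h_K \leq h_\ba$ for $\ba \in \CV_K$ combined with the definition $c_K = \max_{\ba \in \CV_K}(\ra \la)$. For the global bound~\eqref{eq_error_L2_global}, I square the local estimate, apply Cauchy--Schwarz to the $(d+1)$-term vertex sum of each element (accounting for the $\sqrt{d+1}$ factor), sum over $K \in \CT_h$, and exploit the finite-overlap property that each patch $\oa$ is composed of at most $d+1$ elements. The main technical hurdle is the Friedrichs step as stated: the function $\pa \bpi^p u - s_h^\ba(\bpi^p u)$ only lies in the broken space $H^1(\CTa)$ because $\bpi^p u$ is discontinuous across internal faces, so the standard $H^1_0(\oa)$ Friedrichs inequality does not apply verbatim. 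A clean workaround is to work with the equivalent decomposition $u - \CJ_h^p u = \sum_\ba (\pa u - s_h^\ba(\bpi^p u))$, whose summands do belong to $H^1_0(\oa)$ (because $\pa u$ vanishes on $\partial \oa$ for both interior and boundary vertices), and then absorb the additional gradient contribution $\bgrad(\pa(u - \bpi^p u))$ via the product rule and the elementwise Poincar\'e inequality to recover the stated $\ra \la$ factor.
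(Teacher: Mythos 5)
Your overall strategy is the right one and close to the paper's: split $u - \CJ_h^p u = (u-\bpi^p u) + (\bpi^p u - \CJ_h^p u)$, use the zero-mean Poincar\'e inequality for the first piece, decompose the second piece via the partition of unity, apply a Poincar\'e/Friedrichs estimate with constant $\tfrac{2h}{d}$ driven by $\div(\bx-\ba)=d$, and then invoke the patch-wise estimate of Lemma~\ref{lem_globloc_patch_la}. However, there are two concrete problems.

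First, you drop the Lagrange interpolation $\bII^p$ in the partition-of-unity decomposition, writing $(\bpi^p u - \CJ_h^p u)|_K = \sum_{\ba\in\CV_K}(\pa\bpi^p u - s_h^\ba(\bpi^p u))|_K$. While this identity is algebraically true, it does not match the machinery: the primal--dual quotient $\la$ of~\eqref{eq_lmbd} and Lemma~\ref{lem_globloc_patch_la} control $\|\bgrad(\bII^p(\pa\bpi^p u) - s_h^\ba(\bpi^p u))\|_\oa$, not $\|\bgrad(\pa\bpi^p u - s_h^\ba(\bpi^p u))\|_\oa$. These differ by the degree-reduction error $\|\bgrad(\pa\bpi^p u - \bII^p(\pa\bpi^p u))\|_\oa$, which is not controlled by $\|\bgrad(u-\bpi^p u)\|_\oa$. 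The correct decomposition is $\bpi^p u - \CJ_h^p u = \sum_\ba(\bII^p(\pa\bpi^p u) - s_h^\ba(\bpi^p u))$, which holds because $\bII^p$ is a linear projection, so $\sum_\ba\bII^p(\pa\bpi^p u) = \bII^p(\bpi^p u) = \bpi^p u$ (see~\eqref{eq_Ip_PU}).

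Second, the ``main technical hurdle'' you identify --- the discontinuity of the summands across internal faces of $\CTa$ --- is not actually a hurdle, and your proposed workaround would break the argument. The paper does not use a patch-wide Friedrichs inequality for $H^1_0(\oa)$; it uses an \emph{element-wise} Poincar\'e inequality (Lemma~\ref{lemma_poincare_elem}): for $v\in H^1(K)$ vanishing on the face $F$ of $K$ opposite to $\ba$, $\|v\|_K \le \tfrac{2h_K}{d}\|\grad v\|_K$. This is proved exactly by the $\div(\bx-\ba)=d$ trick you outline, but on a single simplex $K$, where the boundary term vanishes because $(\bx-\ba)\cdot\bn=0$ on the faces containing $\ba$ and $v=0$ on $F$. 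Both $\bII^p(\pa\bpi^p u)$ and $s_h^\ba(\bpi^p u)$ vanish on the face of each $K\in\CTa$ opposite to $\ba$ (the first because $\pa$ does and Lagrange interpolation preserves this at the face nodes; the second by the boundary condition in~\eqref{eq_sha}), so Lemma~\ref{lemma_poincare_elem} applies per element, summing to $\le\tfrac{2 h_\ba}{d}\|\bgrad(\cdot)\|_\oa$; continuity across internal faces is never needed. Your workaround, using $u-\CJ_h^p u = \sum_\ba(\pa u - s_h^\ba(\bpi^p u))$, makes things worse: $\|\grad(\pa u - s_h^\ba(\bpi^p u))\|_\oa$ now contains the extra term $\|\bgrad(\pa\bpi^p u - \bII^p(\pa\bpi^p u))\|_\oa$, which does not vanish when $u\in\CP_p(\CT_h)\cap H^1_0(\Omega)$, so the resulting bound would not even recover the projection property of Theorem~\ref{thm_proj}.
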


\subsection{Projection property}

From~\eqref{eq_error_la_global} or~\eqref{eq_error_L2_global},
when $u \in \CP_p(\CT_h)$, then $u=\bpi^p u$. Hence, we immediately have:

\begin{theorem}[Projection] \label{thm_proj}
The quasi-interpolation operator $\CJ_h^p$
from~\eqref{eq_LB}, \eqref{eq_sha}, \eqref{eq_definition_jj} is a projector, i.e.,
\[
    \CJ_h^p(u) = u \qquad \forall u \in \CP_p(\CT_h) \cap H^1_0(\Omega).
\]
\end{theorem}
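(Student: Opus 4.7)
The plan is to leverage directly the quantitative error estimates from Theorem~\ref{thm_error_H1} or Theorem~\ref{thm_error_L2} and show that they collapse to zero when $u$ already belongs to $\CP_p(\CT_h) \cap H^1_0(\Omega)$. The only thing one has to check by hand is that the local-best projector $\bpi^p$ fixes piecewise polynomials of degree at most $p$; everything else then follows from the bounds that have already been proven.

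First, I would verify that $\bpi^p u = u$ for every $u \in \CP_p(\CT_h) \cap H^1_0(\Omega)$. Indeed, for each $K \in \CT_h$, the restriction $u|_K$ belongs to $\CP_p(K)$ and trivially satisfies both defining conditions in~\eqref{eq_LB_K}: it has the correct mean, and $(\grad(u|_K), \grad q_h)_K = (\grad(u|_K), \grad q_h)_K$ for all $q_h \in \CP_p(K)$. By uniqueness of $\pi_K^p$, this forces $\pi_K^p(u|_K) = u|_K$, and assembling element-wise gives $\bpi^p u = u$; in particular $\|\bgrad(u - \bpi^p u)\|_\Omega = 0$.

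Next, I would plug this identity into the global $H^1$ seminorm estimate~\eqref{eq_error_la_global}, whose right-hand side vanishes, yielding $\|\grad(u - \CJ_h^p u)\|_\Omega = 0$. Since $\CJ_h^p u \in \CP_p(\CT_h) \cap H^1_0(\Omega)$ by construction, as each local contribution $s_h^\ba((\bpi^p u)|_\oa)$ is extended by zero outside $\oa$, the difference $u - \CJ_h^p u$ lies in $H^1_0(\Omega)$ and has vanishing gradient, hence it is zero. Alternatively, one could directly invoke the $L^2$ estimate~\eqref{eq_error_L2_global} to conclude $\|u - \CJ_h^p u\|_\Omega = 0$ without relying on the boundary-trace argument, which is arguably cleaner.

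No substantive obstacle is anticipated here: the projection property is a routine corollary of the a priori bounds, and the only step that is not purely mechanical is the observation that $\bpi^p$ preserves $\CP_p(\CT_h)$, which follows at once from the uniqueness of the local $H^1$-orthogonal projection defined in~\eqref{eq_LB}.
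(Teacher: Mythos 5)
Your proposal is correct and follows essentially the same route as the paper's proof: observe that $\bpi^p u = u$ for $u \in \CP_p(\CT_h)$ and then let the already-established bounds~\eqref{eq_error_la_global} or~\eqref{eq_error_L2_global} collapse to zero. You merely spell out two details the paper leaves implicit (the uniqueness argument for $\pi_K^p$ and the fact that a vanishing gradient of an $H^1_0$ function forces the function to vanish), and you sensibly note that invoking the $L^2$ bound sidesteps the latter step.
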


\section{Proofs of the main results}
\label{section_proof}

\subsection{Preliminary results and notation}

For each vertex $\ba \in \CV_h$, there exists a finite set $\REF(\kappa_{\CTa})$
of reference patches only depending on the shape-regularity parameter
$\kappa_{\CTa}$ and space dimension $d$ and satisfying the following properties.
Each $\CTo \in \REF(\kappa_{\CTa})$
is a conforming mesh of simplices sharing the vertex $\bzero$, and $\po$ denotes the
piecewise affine ``hat function'' taking value $1$ at $\bzero$ and $0$ at all the other
vertices of $\CTo$; $\oo$ is the open domain corresponding to $\CTo$. All the elements
$\hK \in \CTo$ satisfy
\begin{equation*}
c(\kappa_{\CTa},d) \leq \rho_{\hK}, \qquad h_{\hK} \leq C(\kappa_{\CTa},d),
\end{equation*}
for two generic constants only depending on $\kappa_{\CTa}$ and $d$. In addition,
there exists a piecewise affine bilipchitz mapping $\phi: \oo \to \oa$
that transforms the elements of $\CTo$ into those of $\CTa$. We denote
by $\phig$ and $\phid$ the associated gradient- and divergence-preserving
Piola mappings,
$\phi^{\rm g}: L^2(\oo) \to L^2(\oa)$ and
$\phi^{\rm d}: \BL^2(\oo) \to \BL^2(\oa)$ respectively defined by
\begin{equation}
\label{eq_definition_piola}
\phi^{\rm g}(\widehat v)
\eq
\widehat v \circ \phi^{-1}
\quad \text{ and } \quad
\phi^{\rm d}(\widehat \bw)
\eq
\left (\frac{\JAC }{|\JAC|} \widehat \bw \right ) \circ \phi^{-1}
\end{equation}
for all $\widehat v \in L^2(\oo)$ and $\widehat \bw \in \BL^2(\oo)$, where $\JAC$
is the Jacobian matrix of $\phi$ and $|\JAC|$ is its determinant
(see, e.g., \cite[Section 7.2]{Ern_Guermond_FEs_I_21}). The following properties of Piola
mappings will be useful.
First, if $\hv \in H^1(\CTo)$ and $\widehat \bw \in \BL^2(\oo)$, we have
\begin{equation}
\label{eq_piola_stokes}
(\bgrad (\phig\hv),\phid\widehat \bw)_{\oa}
=
(\bgrad \hv,\widehat \bw)_{\oo}.
\end{equation}
The Piola mappings are invertible, and $\po \eq (\phig)^{-1} \pa$. We have
$\phig(\po\hv) = \pa\phig(\hv)$ and
\begin{equation}
\label{eq_bound_psig}
\|\bgrad (\phig \hv)\|_{\oa}
\leq
C(\kappa_{\CTa},d) \ha^{d/2-1}
\|\bgrad \hv\|_{\oo}
\quad
\end{equation}
for all $\hv \in H^1(\CTo)$, with $\ha$ introduced in~\eqref{eq_ha}.
Besides, $\phid$ is an isomorphism between $\RT_p(\CTo) \cap \BH(\ddiv^0,\oo)$ and
$\RT_p(\CTa) \cap \BH(\ddiv^0,\oa)$, and we have
\begin{equation}
\label{eq_bound_psid}
\|\phid \widehat \bw\|_{\oa}
\leq
C(\kappa_{\CTa},d) \ha^{1-d/2} \|\widehat \bw\|_{\oo}
\quad
\forall \widehat \bw \in \BL^2(\oo).
\end{equation}

We denote by $\CFo$ the faces of a reference patch $\CTo$. The subset
\begin{equation*}
\CFoi \eq \left \{
F \in \CFo \; | \; \po|_F \neq 0,
\right \}
\end{equation*}
coinciding with interior faces for an interior vertex, will be useful.
If $F \in \CFo$ is a face, $\CP_q(F)$ is the
set of functions $v: F \to \mathbb R$ that are polynomial of degree
less than or equal to $q$ in the ($d-1$) directions tangential to $F$.
For a collection of faces $\CF \subset \CFo$, we write
$\CP_q(\CF) \eq \cup_{F \in \CF} \CP_q(F)$.
We also associate with each face $F \in \CFo$
a unit normal vector $\bn_F$. If $F \subset \partial \oo$, we assume that
$\bn_F$ points outward $\oo$. For interior faces, the orientation of $\bn_F$
is arbitrary, but fixed. If $v \in \CP_{p+1}(\CTo)$, the jump of $v$ of through an
interior face $F = \partial \widehat K_- \cap \widehat \partial K_+ \in \CFo$ is defined by
\begin{equation*}
\jmp{v}_F \eq v_-|_F (\bn_- {\cdot} \bn_F) + v_+|_F (\bn_+ {\cdot} \bn_F)
\end{equation*}
where $v_\pm$ is the restriction of $v$ to $K_\pm$ and $\bn_\pm$
is the unit normal vector to $\partial K_\pm$ pointing outward $K_\pm$;
note that $\bn_\pm {\cdot} \bn_F = \pm 1$ only determines the sign.
For exterior faces $F \subset \partial \oo$, we simply set
\begin{equation*}
\jmp{v}_F = v|_F.
\end{equation*}
Thanks to these definitions, the integration by parts formula
\begin{equation}
\label{eq_stokes_DG}
(\bgrad v,\bw)_{\oo}
=
\sum_{F \in \CFo} (\jmp{v}_F,\bw {\cdot} \bn_F)_F
-
(v,\div \bw)_{\oo}
\end{equation}
holds true for all $\bw \in \RT_p(\CTo) \cap \BH(\ddiv,\oo)$ and $v \in \CP_{p+1}(\CTo)$.

\subsection{Proof of the properties of $\la$ from~\eqref{eq_lmbd}}

Here, we establish the following important result:

\begin{theorem}[Upper bound on $\la$]
\label{thm_la}
The constant $\la$ from~\eqref{eq_lmbd} is finite and only depends on the shape-regularity
parameter of $\CTa$, the space dimension $d$, and the polynomial degree $p$, i.e.,
\begin{equation}
\label{eq_bound_la}
\la \leq C(\kappa_{\CTa},d,p).
\end{equation}
In addition, we have
\begin{equation}
\label{eq_upper_bound_la}
\|\bgrad(\bII^p(\pa u_h)-s_h^\ba(u_h))\|_\oa
\leq
\la \min_{v \in H^1_0(\oa)} \|\bgrad(\pa u_h - v)\|_\oa
\qquad
\end{equation}
for all $u_h \in \CP_p(\CTa)$.
\end{theorem}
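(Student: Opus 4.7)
The plan is to establish the two parts of the theorem in order: first derive~\eqref{eq_upper_bound_la} directly from the defining quotient of $\la$, and then prove the bound~\eqref{eq_bound_la} by transporting everything to the finite family of reference patches and arguing by finite-dimensional compactness.

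For~\eqref{eq_upper_bound_la}, the key observation is that the divergence-free constraint in $\BW_h^\ba$ defined by~\eqref{eq_Wha}, combined with the vanishing trace of any $v \in H^1_0(\oa)$, gives $(\bgrad v, \bw_h)_\oa = -(v, \div \bw_h)_\oa = 0$ for every $\bw_h \in \BW_h^\ba$ by integration by parts. Using this in the dual characterization~\eqref{eq_bra_sup} and Cauchy--Schwarz, I would obtain $\|\br_h^\ba(u_h)\|_\oa \leq \|\bgrad(\pa u_h - v)\|_\oa$ for every $v \in H^1_0(\oa)$, and taking the infimum over $v$ yields $\|\br_h^\ba(u_h)\|_\oa \leq \min_{v \in H^1_0(\oa)}\|\bgrad(\pa u_h - v)\|_\oa$. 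Combining this with the definition~\eqref{eq_lmbd} of $\la$ then immediately produces~\eqref{eq_upper_bound_la}.

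For~\eqref{eq_bound_la}, the plan is to pull the primal--dual quotient~\eqref{eq_lmbd} back to the finite family of reference patches $\REF(\kappa_{\CTa})$ by means of the Piola mappings from~\eqref{eq_definition_piola}. Using $\phig(\po \hu) = \pa \phig(\hu)$, the elementwise commutativity of $\bII^p$ with the piecewise-affine map $\phi$, the commutation formula~\eqref{eq_piola_stokes}, and the isomorphism property of $\phid$ between $\RT_p(\CTo) \cap \BH(\ddiv^0,\oo)$ and $\BW_h^\ba$, one checks that the reconstructions pull back (up to equivalence) to their reference-patch analogues. The scaling bounds~\eqref{eq_bound_psig}, \eqref{eq_bound_psid} then show that the numerator and denominator of~\eqref{eq_lmbd} each scale by equivalent $\ha$-dependent factors, and these factors cancel in the ratio, giving $\la \leq C(\kappa_{\CTa},d) \lo$ where $\lo$ is the corresponding reference constant. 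Since $\REF(\kappa_{\CTa})$ is finite with cardinality depending only on $\kappa_{\CTa}$ and $d$, the estimate~\eqref{eq_bound_la} follows as soon as $\lo<\infty$ for each reference configuration.

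The hardest step will be showing that $\lo$ is finite. Since the quotient defining $\lo$ is continuous on the finite-dimensional space $\CP_p(\CTo)$, by compactness it suffices to establish a kernel inclusion: every $\hu_h \in \CP_p(\CTo)$ with $\br_h^\bzero(\hu_h) = 0$ must also satisfy $\bII^p(\po \hu_h) - s_h^\bzero(\hu_h) = 0$, equivalently $\bII^p(\po \hu_h) \in \CP_p(\CTo) \cap H^1_0(\oo)$. Invoking the broken integration-by-parts formula~\eqref{eq_stokes_DG} together with $\div \bw_h = 0$, the condition $\br_h^\bzero(\hu_h)=0$ rewrites as
\[
\sum_{F \in \CFo}\bigl(\jmp{\po \hu_h}_F,\, \bw_h \cdot \bn_F\bigr)_F = 0 \qquad \forall \bw_h \in \RT_p(\CTo)\cap\BH(\ddiv^0,\oo).
\]
The delicate point is to deduce from this discrete orthogonality---where the available test space consists of face polynomials of degree only $p$ subject to patch-wide divergence-free constraints, while $\jmp{\po \hu_h}_F$ is a priori of degree $p+1$---that the values of $\jmp{\po \hu_h}_F$ at the degree-$p$ Lagrange nodes of each interior face vanish, which is exactly the information that controls the jumps of $\bII^p(\po \hu_h)$ and forces it into $H^1_0(\oo)$. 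I expect this to rely on a local discrete duality / exact-sequence argument on the reference patch, in the spirit of the flux reconstruction machinery of~\cite{ern_vohralik_2015a}, and to constitute the genuine technical core of the theorem.
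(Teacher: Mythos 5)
You correctly dispose of~\eqref{eq_upper_bound_la} exactly as the paper does (orthogonality of $\bgrad v$ against divergence-free $\bw_h$, dual characterization~\eqref{eq_bra_sup}, Cauchy--Schwarz), and the transfer of $\la$ to a finite family of reference patches via the Piola mappings~\eqref{eq_definition_piola} with the bounds~\eqref{eq_bound_psig}, \eqref{eq_bound_psid} also matches the paper's strategy. You also correctly pinpoint the remaining content: the kernel inclusion on the reference patch, i.e.\ that $\widehat\br_h^{\bzero}(\hu_h)=0$ forces $\bII^p(\po\hu_h)\in\CP_p(\CTo)\cap H^1_0(\oo)$, reformulated via~\eqref{eq_stokes_DG} as an orthogonality of jumps against admissible normal traces. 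Up to this point the proposal is sound.

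The gap is in how you propose to close that last step. You frame the obstacle as a degree mismatch---jumps of degree $p+1$ tested only against face polynomials of degree $p$---and anticipate a nontrivial ``discrete duality / exact-sequence'' argument. This misidentifies the difficulty and, as stated, would lead you astray. The decisive (and elementary) observation is that $\po$ is continuous, so on every face $F$ one has $\jmp{\po\hu_h}_F=\po|_F\,\jmp{\hu_h}_F$, where $\jmp{\hu_h}_F$ has degree exactly $p$, not $p+1$. Thus the degree-$p$ Raviart--Thomas normal traces are precisely the right test functions: choosing $\bw_h$ with $\bw_h\cdot\bn_F=\jmp{\hu_h}_F$ on every $F\in\CFoi$ turns the orthogonality into $\sum_{F\in\CFoi}\|\po^{1/2}\jmp{\hu_h}_F\|_F^2=0$, and since $\po>0$ a.e.\ on such faces this forces $\jmp{\hu_h}_F\equiv 0$. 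You then get the stronger conclusion $\hu_h\in H^1(\oo)$ and $\po\hu_h\in H^1_0(\oo)$ (not just agreement at Lagrange nodes), from which $\bII^p(\po\hu_h)\in\CP_p(\CTo)\cap H^1_0(\oo)$ and hence $\hs_h^{\bzero}(\hu_h)=\bII^p(\po\hu_h)$ follow immediately. What this construction does require---and what is missing from your sketch---is a surjectivity statement: that, despite the patch-wide divergence-free constraint in $\widehat\BW_h^{\bzero}$, one can freely prescribe $\bw_h\cdot\bn_F$ on all faces $F\in\CFoi$. This is the paper's Lemma~\ref{lemma_traces}, whose proof is elementary and relies on the fact that each $K\in\CTo$ has a face opposite the central vertex not in $\CFoi$, on which the Neumann compatibility datum can be absorbed. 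So the step you expected to be the technical core is in fact short and elementary once the hat-function factorization and the trace lemma are in hand; without those two ingredients, your plan does not yet close.
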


Before proving Theorem~\ref{thm_la}, we first establish a few intermediate results.
If $\CTo$ is a reference patch,
we use the notation $\widehat \BW^{\bzero}_h \eq \RT_p(\CTo) \cap \BH(\ddiv^0,\oo)$
for the counterpart to $\BW^{\ba}_h$ of~\eqref{eq_Wha}.
We first show that even though functions in $\widehat \BW_h^{\bzero}$
satisfy a divergence constraint, we are still free to assign their normal
traces on relevant faces.

\begin{lemma}[Traces of Raviart--Thomas elements]
\label{lemma_traces}
Let $\CTo$ be a reference patch.
For each $F \in \CFoi$, consider $q_F \in \CP_p(F)$.
Then, there exists $\bw_h \in \widehat \BW_h^{\bzero}$
such that $\bw_h {\cdot} \bn_F = q_F$ for all $F \in \CFoi$.
\end{lemma}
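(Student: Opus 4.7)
The plan is to construct $\bw_h$ element by element on the reference patch $\CTo$, exploiting the following geometric observation. Every simplex $K \in \CTo$ shares the vertex $\bzero$, so exactly $d$ of its $d+1$ faces belong to $\CFoi$ (those containing $\bzero$), while the remaining face $F_K$, opposite $\bzero$, lies in $\CFo \setminus \CFoi$. Moreover, any interior face of $\CTo$ automatically lies in $\CFoi$, because a face shared between two elements of a vertex patch necessarily contains $\bzero$; hence each outer face $F_K$ is a boundary face of $\oo$, and the normal trace on $F_K$ may be chosen freely, independently on each $K$, with no normal-continuity constraint to enforce.

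For each $K \in \CTo$ separately, I would construct $\bw_h|_K \in \RT_p(K)$ satisfying $\bw_h|_K \cdot \bn_F = q_F$ on the $d$ faces of $K$ in $\CFoi$, a still-to-be-chosen trace $r_K \in \CP_p(F_K)$ on $F_K$, and $\div(\bw_h|_K) = 0$. The divergence theorem imposes a single scalar compatibility condition on $r_K$, namely that the signed integral of $r_K$ over $F_K$ balances the signed fluxes of the $q_F$ on the other $d$ faces of $K$; this is trivially met, for instance, by taking $r_K$ constant. The construction itself then proceeds in two steps. First, pick any $\bw_0 \in \RT_p(K)$ realizing the $d+1$ prescribed face traces, which is immediate from the Raviart--Thomas face degrees of freedom (with the interior ones set to zero); by the choice of $r_K$, the polynomial $g \eq \div \bw_0 \in \CP_p(K)$ has vanishing mean. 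Second, subtract a corrector $\bw_1 \in \RT_p(K)$ with $\bw_1 \cdot \bn = 0$ on $\partial K$ and $\div \bw_1 = g$, and define $\bw_h|_K \eq \bw_0 - \bw_1$.

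Assembly is then automatic: on every interior face of $\CTo$, which belongs to $\CFoi$, both adjacent elements impose the same $q_F$ with consistent orientation, so the collected $\bw_h$ belongs to $\RT_p(\CTo) \cap \BH(\ddiv, \oo)$; the element-wise identities $\div(\bw_h|_K) = 0$ then give $\bw_h \in \widehat \BW_h^{\bzero}$, and the prescribed traces on $\CFoi$ hold by construction. The only nontrivial ingredient is the existence of the corrector $\bw_1$, which amounts to the surjectivity of $\div$ from the subspace of $\RT_p(K)$ with zero normal trace on $\partial K$ onto the mean-zero subspace of $\CP_p(K)$ on a single simplex. This is a classical local exactness property of the finite-element de Rham sequence (trivially true for $p = 0$, since the bubble space is then $\{0\}$ and $g$ is forced to vanish), and will be the main technical point to invoke; everything else is pure bookkeeping.
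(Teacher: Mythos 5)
Your proposal is correct and follows essentially the same element-by-element construction as the paper's proof: both hinge on the observation that every $K \in \CTo$ has exactly one face (the one opposite $\bzero$) lying outside $\CFoi$, on which the normal trace may be adjusted freely to satisfy the Neumann compatibility condition, after which one solves a local zero-divergence $\RT_p(K)$ problem. The only difference is stylistic — you spell out the local solvability via a zero-trace divergence corrector and make the inter-element gluing explicit, whereas the paper simply invokes the equivalent Neumann-compatibility criterion for $\RT_p(K)$.
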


\begin{proof}
We will construct a suitable function $\bw_h \in \widehat \BW_h^{\bzero}$ element by element.
Let $K \in \CTo$ be fixed.  Note that given $r_{\CF} \in \CP_p(\CF_K)$ and 
$r_K \in \CP_p(K)$, there exists a function $\bw_{K} \in \RT_p(K)$ such that
$\bw_{K} {\cdot} \bn_K = r_{\CF}$ on $\partial K$ and $\div \bw_{K} = r_K$ in $K$ if and only if
the Neumann compatibility condition $(r_K,1)_K = (r_{\CF},1)_{\partial K}$ is satisfied. 
The key observation is that there is always a face $F^\star \in \CF_K$ such that
$F^\star \notin \CFoi$, i.e., $\po|_F = 0$: this is the face opposite to the vertex $\ba$.
Then, we select a boundary datum $r_{\CF} \in \CP_p(\CF_K)$ such that $r_{\CF}|_F = q_F$ for
all faces $F$ of $K$ such that $F \in \CFoi$ and, by fixing a suitable Neumann value on $F^\star$,
such that $(r_{\CF},1)_{\CF_K} = 0$. Finally, we are eligible to impose
$\bw_h|_K {\cdot} \bn_K = r_{\CF}$ on $\partial K$ and $\div (\bw_h|_K) = 0$.
\end{proof}

By carefully selecting their normal traces, we can then show that functions in
$\widehat \BW_h^{\bzero}$ (and, after mapping, in $\BW_h^{\ba}$)
can be successfully employed to measure the level of nonconformity of a piecewise polynomial
function.

\begin{corollary}[Control of the nonconformity]
\label{corollary_H1}
For all $u_h \in \CP_p(\CTo)$, if
\begin{equation*}
(\bgrad(\po u_h),\vv_h)_{\oo} = 0
\quad
\forall \vv_h \in \widehat \BW_h^{\bzero},
\end{equation*}
then $\po u_h \in H^1_0(\oo)$. In particular, $u_h \in H^1(\oo)$. 
\end{corollary}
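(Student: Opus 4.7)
I would combine the discrete integration by parts identity~\eqref{eq_stokes_DG} with the trace-prescription freedom of Lemma~\ref{lemma_traces} to squeeze the orthogonality hypothesis down to pointwise information on jumps of $u_h$.

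First, since $\po u_h \in \CP_{p+1}(\CTo)$, I apply~\eqref{eq_stokes_DG} with $v = \po u_h$ and $\bw = \vv_h \in \widehat \BW_h^{\bzero}$. The volume term drops because $\div \vv_h = 0$. On every face $F \notin \CFoi$ the hat function $\po$ vanishes identically, so by its global continuity $\po u_h$ vanishes on both sides of $F$ and $\jmp{\po u_h}_F = 0$. The hypothesis thus reduces to
\[
\sum_{F \in \CFoi} (\jmp{\po u_h}_F,\, \vv_h \cdot \bn_F)_F \;=\; 0 \qquad \forall \vv_h \in \widehat \BW_h^{\bzero}.
\]

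The decisive step is the choice of test function. Continuity of $\po$ yields $\jmp{\po u_h}_F = \po|_F\, \jmp{u_h}_F$, and since $u_h \in \CP_p(\CTo)$ the factor $\jmp{u_h}_F$ lies in $\CP_p(F)$, which is precisely the degree that Lemma~\ref{lemma_traces} allows to prescribe as a normal trace. Taking $\vv_h \in \widehat \BW_h^{\bzero}$ with $\vv_h \cdot \bn_F = \jmp{u_h}_F$ simultaneously on every $F \in \CFoi$ turns the displayed identity into
\[
\sum_{F \in \CFoi} \int_F \po \,|\jmp{u_h}_F|^2 \,\mathrm dS \;=\; 0.
\]
Each summand is nonnegative and $\po$ is a nonzero nonnegative affine function on each $F \in \CFoi$, so $\jmp{u_h}_F = 0$ pointwise on every $F \in \CFoi$.

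It remains to collect the consequences. On any face of $\CTo$, either $F \in \CFoi$, in which case $\jmp{\po u_h}_F = 0$ thanks to $\jmp{u_h}_F = 0$ and continuity of $\po$, or $F \notin \CFoi$, in which case the jump vanishes because $\po|_F = 0$. Hence $\po u_h$ is continuous across every interior face and has vanishing trace on $\partial \oo$, i.e., $\po u_h \in H^1_0(\oo)$. For the ``in particular'' statement I would use the combinatorial observation that every interior face of $\CTo$ must contain $\bzero$: if $\bzero$ were not on the shared face $F$, then both adjacent simplices (each with $d{+}1$ vertices and both containing $\bzero$) would have vertex set $F \cup \{\bzero\}$ and therefore coincide. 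So every interior face lies in $\CFoi$, $\jmp{u_h}_F = 0$ across all of them, and $u_h \in H^1(\oo)$. The main obstacle is the degree mismatch between $\po u_h$ (of degree $p+1$) and the degree-$p$ $\RT_p$ normal traces; the resolution is to leave $\po$ as a weight inside the integrand rather than trying to absorb it into the polynomial test space, which is what enables the final pointwise sign argument.
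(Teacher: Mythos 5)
Your proof is correct and follows essentially the same route as the paper: integrate by parts via~\eqref{eq_stokes_DG}, observe the volume term vanishes and the sum localizes to $\CFoi$ by continuity of $\po$, invoke Lemma~\ref{lemma_traces} to prescribe $\vv_h\cdot\bn_F=\jmp{u_h}_F$, and conclude from positivity of $\po$ on $\CFoi$ that all relevant jumps vanish. The only (welcome) addition is your explicit combinatorial justification that every interior face of $\CTo$ contains $\bzero$ and hence lies in $\CFoi$, a fact the paper uses tacitly when passing from $\jmp{u_h}_F=0$ on $\CFoi$ to $u_h\in H^1(\oo)$.
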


\begin{proof}
Let $u_h \in \CP_p(\CTo)$. For all $\vv_h \in \widehat \BW_h^{\bzero}$,
since $\div \vv_h = 0$, Stokes' formula~\eqref{eq_stokes_DG} shows that
\begin{equation*}
(\bgrad(\po u_h),\vv_h)_{\oo}
=
\sum_{F \in \CFo}
(\jmp{\po u_h}_F,\vv_h {\cdot} \bn_F)_F
=
\sum_{F \in \CFoi}
(\po \jmp{u_h}_F,\vv_h {\cdot} \bn_F)_F,
\end{equation*}
since $\po$ is continuous and $\po|_F = 0$ for all
$F \in \CFo \setminus \CFoi$.
Following Lemma~\ref{lemma_traces}, we can now pick
$\vv_h^\star \in \widehat \BW_h^{\bzero}$
such that $\vv_h^\star {\cdot} \bn_F = \jmp{u_h}_F$ for all $F \in \CFoi$.
This gives
\begin{equation*}
\sum_{F \in \CFoi} \|\po^{1/2} \jmp{u_h}_F\|_F^2
=
(\bgrad(\po u_h),\vv_h^{\star})_{\oo}
=
0.
\end{equation*}
Since $\po > 0$ a.e. on each $F \in \CFoi$, this shows that $\jmp{u_h}_F = 0$
for all $F \in \CFoi$. Hence $u_h \in H^1(\oo)$. 
There also holds $u_h = 0$ on $F \subset \partial\oo$, $F \in \CFoi$
(such faces only exist when the vertex $\bzero$ lies on the boundary of $\oo$).
Thus, it immediately follows that $\po u_h \in H^1_0(\oo)$.
\end{proof}

We are now ready to establish our main result concerning $\la$.

\begin{proof}[Proof of Theorem~\ref{thm_la}]
Fix a vertex $\ba \in \CV_h$, and consider the associated reference patch
$\CTo \in \REF(\kappa_{\CTa})$.

If $\hu_h \in \CP_p(\CTo)$, we define $\hs_h^{\bzero}(\hu_h) \in \CP_p(\CTo) \cap H^1_0(\oo)$
by requiring that
\begin{equation} \label{eq_definition_s0}
(\grad \hs_h^{\bzero}(\hu_h),\grad \hv_h)_{\oo}
=
(\bgrad (\bII^p(\po \hu_h)),\grad \hv_h)_{\oo}
\qquad
\forall \hv_h \in \CP_p(\CTo) \cap H^1_0(\oo),
\end{equation}
and similarly we let $\widehat \br_h^{\bzero}(\hu_h)$ be the only element of
$\widehat \BW_h^{\bzero}$
such that
\begin{equation} \label{eq_br0}
(\widehat \br_h^{\bzero}(\hu_h),\widehat \bw_h)_{\oo}
=
(\bgrad(\po \hu_h),\widehat \bw_h)_{\oo} \qquad \forall \widehat \bw_h \in \widehat \BW_h^{\bzero}.
\end{equation}
These are the natural counterparts to $s_h^{\ba}(u_h)$ from~\eqref{eq_sha}
and $\br_h^{\ba}(u_h)$ from~\eqref{eq_bra} on the reference patch $\CTo$.

Now consider an arbitrary $u_h \in \CP_p(\CTa)$. Since $s_h^{\ba}(u_h)$
is the minimizer, see~\eqref{eq_sha_min}, we have
\begin{align*}
\|\bgrad(\bII^p(\pa u_h)-s_h^\ba(u_h))\|_{\oa}
&\leq
\|\bgrad(\bII^p(\pa u_h)-\phig(\hv_h))\|_{\oa}
\\
&\leq
C(\kappa_{\CTa},d) \ha^{d/2-1}
\|\bgrad(\widehat\CI_h^p(\po \hu_h)-\hv_h)\|_{\oo}
\end{align*}
for any $\hv_h \in \CP_p(\CTo) \cap H^1_0(\oo)$, with $\hu_h \eq (\phig)^{-1}(u_h) \in \CP_p(\CTo)$.
Here, we employed the fact that $\phig$ preserves piecewise polynomial
functions as well as point values in the Lagrange interpolation nodes
(affine-equivalent degrees of freedom) and used~\eqref{eq_bound_psig} in the last inequality.
We can minimize the last norm in the right-hand side, leading to
\begin{equation}
\label{tmp_bound_numerator}
\|\bgrad(\bII^p(\pa u_h)-s_h^\ba(u_h))\|_{\oa}
\leq
C(\kappa_{\CTa},d) \ha^{d/2-1}
\|\bgrad(\bII^p(\po \hu_h)-\hs_h^{\bzero}(\hu_h))\|_{\oo},
\end{equation}
by definition~\eqref{eq_definition_s0} of $\hs_h^{\bzero}(\hu_h)$.

On the other hand, still for the chosen $u_h \in \CP_p(\CTa)$, because of~\eqref{eq_piola_stokes},
for any $\widehat \bw_h \in \widehat \BW_h^{\bzero}$, we also have
\begin{equation} \label{eq_Piola}
(\bgrad(\pa u_h),\phid(\widehat \bw_h))_{\oa}
=
(\bgrad(\po\hu_h),\widehat \bw_h)_{\oo}.
\end{equation}
Moreover, as in~\eqref{eq_bra_sup}, from~\eqref{eq_br0}, there holds
\begin{equation}
\label{eq_bra_sup_0}
\|\widehat \br_h^{\bzero}(\hu_h)\|_{\oo}
=
\max_{\substack{\widehat \bw_h \in \widehat \BW_h^{\bzero} \\ \|\widehat \bw_h\|_{\oo} = 1}}
(\bgrad(\po \hu_h),\widehat \bw_h)_{\oo}.
\end{equation}
Thus, using the maximizer from~\eqref{eq_bra_sup_0},
$\widehat \bw_h^\star \in \widehat \BW_h^{\bzero}$ with $\|\widehat \bw_h^\star\|_{\oo} = 1$,
and employing $\phid(\widehat \bw_h^\star) /$ $\|\phid(\widehat \bw_h^\star)\|_{\oa}$
in~\eqref{eq_bra_sup}, 
we have
\begin{align*}
\|\br_h^\ba(u_h)\|_{\oa}
&\stackrel{\eqref{eq_bra_sup}}{\geq}
\frac{1}{\|\phid(\widehat \bw_h^\star)\|_{\oa}}
(\bgrad(\pa u_h),\phid(\widehat \bw_h^\star))_{\oa}
\\
& \stackrel{\eqref{eq_Piola}}{=}
\frac{1}{\|\phid(\widehat \bw_h^\star)\|_{\oa}}
(\bgrad(\po \hu_h),\widehat \bw_h^\star))_{\oo}
\stackrel{\eqref{eq_bra_sup_0}}{=}
\frac{1}{\|\phid(\widehat \bw_h^\star)\|_{\oa}}
\|\widehat \br_h^{\bzero}(\hu_h)\|_{\oo}.
\end{align*}
Finally, we use~\eqref{eq_bound_psid} to establish that
\begin{equation*}
\|\phid(\widehat \bw_h^\star)\|_{\oa}
\leq
C(\kappa_{\CTa},d) \ha^{1-d/2}\|\widehat \bw_h^\star\|_{\oo}
=
C(\kappa_{\CTa},d) \ha^{1-d/2},
\end{equation*}
which leads to
\begin{equation}
\label{tmp_bound_denumerator}
\frac{1}{\|\br_h^{\ba}(u_h)\|_{\oa}}
\leq
C(\kappa_{\CTa},d)\frac{\ha^{1-d/2}}{\|\widehat \br_h^{\bzero}(\hu_h)\|_{\oo}}.
\end{equation}
Combining~\eqref{tmp_bound_numerator} and~\eqref{tmp_bound_denumerator} gives
\begin{equation*}
\frac{\|\bgrad(\bII^p(\pa u_h)-s_h^{\ba}(u_h))\|_{\oa}}{\|\br_h^{\ba}(u_h)\|_{\oa}}
\leq
C(\kappa_{\CTa},d)
\frac{\|\bgrad(\bII^p(\po \hu_h)-\hs_h^{\bzero}(\hu_h))\|_{\oo}}{\|\widehat \br_h^{\bzero}(\hu_h)\|_{\oo}}
\leq
C(\kappa_{\CTa},d) \lo,
\end{equation*}
where
\begin{equation*}
\lo \eq \max_{\hu_h \in \CP_p(\CTo)}
\frac{\|\bgrad(\bII^p(\po \hu_h)-\hs_h^{\bzero}(\hu_h))\|_{\oo}}{\|\widehat \br_h^{\bzero}(\hu_h)\|_{\oo}}.
\end{equation*}
In other words, since the last inequality is valid for all $u_h \in \CP_p(\CTa)$,
we have actually shown that $\la \leq C(\kappa_{\CTa},d) \lo$.

We next show that $\lo$ is finite, only depending on the shape regularity $\kappa_{\CTa}$,
the space dimension $d$, and the polynomial degree $p$. Since $\CP_p(\CTo)$ is a finite-dimensional
vector space (with dimension only depending on $\kappa_{\CTa}$, $d$, and $p$),
it suffices to show that $\|\widehat \br_h^{\bzero}(\hu_h)\|_{\oo} = 0$
implies that $\|\bgrad(\bII^p(\po \hu_h)-\hs_h^{\bzero}(\hu_h))\|_{\oo} = 0$ for all
$\hu_h \in \CP_p(\CTo)$. Thus, we consider $\hu_h \in \CP_p(\CTo)$ such that
$\|\widehat \br_h^{\bzero}(\hu_h)\|_{\oo} = 0$. Due to~\eqref{eq_bra_sup_0} and Corollary
\ref{corollary_H1}, $\po \hu_h \in \CP_{p+1}(\CTo) \cap H^1_0(\oo)$. Then, using the
minimization property of $\hs_h^{\bzero}(\hu_h)$ that follows from~\eqref{eq_definition_s0},
cf.~\eqref{eq_sha_min}, we have
\begin{equation*}
\|\bgrad(\bII^p(\po \hu_h)-\hs_h^{\bzero}(u_h))\|_{\oo}
\leq
\|\bgrad(\bII^p(\po \hu_h)-\bII^p(\po \hu_h))\|_{\oo}
=
0,
\end{equation*}
since $\bII^p(\po \hu_h) \in \CP_p(\CTo) \cap H^1_0(\oo)$.
Thus, we have established~\eqref{eq_bound_la}.

It remains to establish the bound~\eqref{eq_upper_bound_la}. We first note that
\begin{equation*}
\|\bgrad(\bII^p(\pa u_h)-s_h^\ba(u_h))\|_\oa
\leq
\la \|\br^\ba(u_h)\|_\oa
\qquad
\forall u_h \in \CP_p(\CTa)
\end{equation*}
by the definition of $\la$. On the other hand, using~\eqref{eq_bra_sup}, we have
\begin{align*}
\|\br^\ba(u_h)\|_\oa
& =
\max_{\substack{\bw_h \in \BW_h^\ba \\ \|\bw_h\|_\oa = 1}}
(\bgrad(\pa u_h),\bw_h)_\oa
=
\max_{\substack{\bw_h \in \BW_h^\ba \\ \|\bw_h\|_\oa = 1}}
(\bgrad(\pa u_h-v),\bw_h)_\oa \\
& \leq
\|\bgrad(\pa u_h-v)\|_\oa
\end{align*}
for all $v \in H^1_0(\oa)$, since we always have
\begin{equation*}
(\bgrad v,\bw_h)_\oa
=
(\grad v,\bw_h)_\oa
=
- (v,\div \bw_h)_\oa = 0
\end{equation*}
for all $\bw_h \in \BW_h^\ba$.
\end{proof}

\subsection{Proof of $H^1$ seminorm error estimates of Theorem~\ref{thm_error_H1}}

In this section, we now derive the local and global error estimates
in the $H^1$ norm stated in Theorem~\ref{thm_error_H1}. We start with a
fundamental result concerning the local contributions $s_h^{\ba}$.

\begin{lemma}[Patch-wise estimates]\label{lem_globloc_patch_la}
The estimate
\begin{equation}
\label{eq_globloc_patch_la}
\|\bgrad(\bII^p(\pa \bpi^p u)-s_h^\ba((\bpi^p u)|_\oa))\|_\oa
\leq
\ra \la \|\bgrad(u-\bpi^p u)\|_\oa
\end{equation}
holds true for each vertex $\ba \in \CV_h$.
\end{lemma}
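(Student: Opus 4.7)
The plan is to chain together three inequalities: the patch bound provided by Theorem~\ref{thm_la}, a judicious choice of comparison function in $H^1_0(\oa)$, and a Leibniz-plus-Poincar\'e estimate that brings out precisely the constant $\ra$.

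First I apply the bound~\eqref{eq_upper_bound_la} of Theorem~\ref{thm_la} with the admissible argument $u_h \eq (\bpi^p u)|_\oa \in \CP_p(\CTa)$. This immediately gives
\begin{equation*}
\|\bgrad(\bII^p(\pa \bpi^p u)-s_h^\ba((\bpi^p u)|_\oa))\|_\oa
\leq
\la \min_{v \in H^1_0(\oa)} \|\bgrad(\pa \bpi^p u - v)\|_\oa.
\end{equation*}
The key observation, and the only place where the conformity of $u$ enters, is that $\pa u \in H^1_0(\oa)$: indeed $u \in H^1_0(\Omega)$ and $\pa$ vanishes on $\partial \oa$, so $\pa u$ is an admissible competitor in the minimization above. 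Substituting $v = \pa u$ yields
\begin{equation*}
\min_{v \in H^1_0(\oa)} \|\bgrad(\pa \bpi^p u - v)\|_\oa
\leq
\|\bgrad(\pa(u - \bpi^p u))\|_\oa.
\end{equation*}

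It then remains to bound $\|\bgrad(\pa w)\|_\oa$ by $\ra \|\bgrad w\|_\oa$ for $w \eq u-\bpi^p u$. I use the (broken) Leibniz rule $\bgrad(\pa w) = \pa \bgrad w + w \grad \pa$ together with the triangle inequality, so that
\begin{equation*}
\|\bgrad(\pa w)\|_\oa \leq \|\pa \bgrad w\|_\oa + \|w \grad \pa\|_\oa.
\end{equation*}
For the first term, $\|\pa\|_{L^\infty(\oa)} = 1$ from~\eqref{eq_pa} gives $\|\pa \bgrad w\|_\oa \leq \|\bgrad w\|_\oa$. For the second term, I use $\|\grad \pa\|_{\BL^\infty(K)} = 1/\haK$ from~\eqref{eq_pa} element by element, combined with the Poincar\'e inequality~\eqref{eq_poincare}. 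The crucial point here is that $\bpi^p$ preserves the elementwise mean value (see~\eqref{eq_LB_K}, which is imposed together with $(\pi_K^p v,1)_K = (v,1)_K$), so that $(w,1)_K = 0$ on every $K \in \CTa$ and~\eqref{eq_poincare} applies to give $\|w\|_K \leq (h_K/\pi)\|\grad w\|_K$. Summing the elementwise estimates and factoring out the maximum yields
\begin{equation*}
\|w \grad \pa\|_\oa \leq \frac{1}{\pi}\max_{K\in\CTa}\frac{h_K}{\haK}\,\|\bgrad w\|_\oa,
\end{equation*}
so that by definition~\eqref{eq_ra} of $\ra$,
\begin{equation*}
\|\bgrad(\pa w)\|_\oa \leq \ra \|\bgrad w\|_\oa = \ra \|\bgrad(u-\bpi^p u)\|_\oa.
\end{equation*}
Combining the three inequalities delivers~\eqref{eq_globloc_patch_la}.

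There is no real obstacle here: once Theorem~\ref{thm_la} is in hand, the only subtlety is ensuring the admissibility of $\pa u$ in $H^1_0(\oa)$ (which uses the zero trace of $u$ on $\partial\Omega$ and the compact support of $\pa$) and ensuring the zero-mean hypothesis of Poincar\'e holds for $w$ on each $K \in \CTa$, which follows directly from the definition of the local-best projector $\bpi^p$. The constant $\ra$ emerges naturally as the sum of the $\|\pa\|_{L^\infty}$-contribution and the Poincar\'e/gradient contribution of the hat function.
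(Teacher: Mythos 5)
Your proposal is correct and follows essentially the same route as the paper: apply \eqref{eq_upper_bound_la} with $u_h = (\bpi^p u)|_\oa$, use $\pa u \in H^1_0(\oa)$ as the comparison function in the minimization, then control $\|\bgrad(\pa(u-\bpi^p u))\|_\oa$ via the Leibniz rule, \eqref{eq_pa}, and the element-wise Poincar\'e inequality \eqref{eq_poincare} (which applies because $\pi_K^p$ preserves the mean). The only cosmetic difference is that you split the Leibniz terms with the triangle inequality at the patch level, whereas the paper bounds $\|\grad(\pa(u-\bpi^p u))\|_K \le (1+h_K/(\pi\haK))\|\grad(u-\bpi^p u)\|_K$ element by element and then maximizes; both organizations yield the same constant $\ra$.
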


\begin{proof}
We start with~\eqref{eq_upper_bound_la}. Next, since $\pa u \in H^1_0(\oa)$, we have
\begin{equation*}
\min_{v \in H^1_0(\oa)} \|\bgrad(\pa \bpi^p u-v)\|_\oa
\leq
\|\bgrad(\pa (\bpi^p u-u))\|_\oa.
\end{equation*}
Recalling that $(\bpi^p u,1)_K = (u,1)_K$ for all $K \in \CTa$ from~\eqref{eq_LB},
we then employ the product rule element-wise to show that
\begin{align*}
\|\grad(\pa (u-\bpi^p u))\|_K
&\leq
\|\grad \pa\|_{\BL^\infty(K)}\|u-\bpi^p u\|_K + \|\pa\|_{L^\infty(K)}\|\grad(u-\bpi^p u)\|_K
\\
&\leq
\left (1 + \frac{h_K}{\pi\haK}\right )
\|\grad(u-\bpi^p u)\|_K,
\end{align*}
where we employed~\eqref{eq_pa} and~\eqref{eq_poincare}. The desired
result~\eqref{eq_globloc_patch_la} follows by the definition of $\ra$ in~\eqref{eq_ra}.
\end{proof}

\begin{proof}[Proof of Theorem~\ref{thm_error_H1}]
From~\eqref{eq_LB_K}, the Pythagoras equality yields
\begin{equation*}
\|\grad(u-\CJ_h^p(u))\|_K^{2}
=
\|\grad(u-\bpi^p u)\|_K^{2} + \|\grad(\bpi^p u-\CJ_h^{p}(u))\|_K^{2}.
\end{equation*}
The partition of unity~\eqref{eq_PU} and the projection and linearity properties of $\bII^p$ give
\begin{equation}
\label{eq_Ip_PU}
\sum_{\ba \in \CV_h} \bII^p(\pa\bpi^p u)
=
\bII^p\Bigg(\sum_{\ba \in \CV_h} (\pa\bpi^p u)\Bigg)
=
\bII^p(\bpi^p u)
=
\bpi^p u.
\end{equation}
Thus, together with definition~\eqref{eq_definition_jj} and Lemma~\ref{lem_globloc_patch_la}, we estimate the second term as
\begin{align*}
\|\grad(\bpi^p u - \CJ_h^p(u))\|_K
&=
\left \|
\sum_{\ba \in \CV_K} \grad(\bII^p(\pa\bpi^p u) - s_h^\ba((\bpi^p u)|_\oa))
\right \|_K
\\
&\leq
\sum_{\ba \in \CV_K}\|\grad(\bII^p(\pa\bpi^p u) - s_h^\ba((\bpi^p u)|_\oa))\|_K
\\
&\leq
\sum_{\ba \in \CV_K}\|\bgrad(\bII^p(\pa\bpi^p u) - s_h^\ba((\bpi^p u)|_\oa))\|_\oa
\\
&\leq
\sum_{\ba \in \CV_K}\ra\la\|\bgrad(u-\bpi^p u)\|_\oa,
\end{align*}
and the first inequality in~\eqref{eq_error_la_K} immediately follows.

We also have
\begin{align*}
\left (\sum_{\ba \in \CV_K}\ra\la\|\bgrad(u-\bpi^p u)\|_\oa \right )^2
&\leq
(d+1)\sum_{\ba \in \CV_K}(\ra\la)^2\|\bgrad(u-\bpi^p u)\|_\oa^2
\\
&\leq
(d+1)\max_{\ba \in \CV_K} (\ra\la)^2\sum_{\ba \in \CV_K}\|\bgrad(u-\bpi^p u)\|_\oa^2,
\end{align*}
which proves the second inequality in~\eqref{eq_error_la_K}.

Finally, to establish~\eqref{eq_error_la_global}, we treat sharply
\begin{equation}
\label{eq_Om_gr}
\begin{split}
\|\bgrad(\bpi^p u - \CJ_h^p(u))\|_\Omega^2
&=
\sum_{K \in \CT_h} \|\grad(\bpi^p u - \CJ_h^p(u))\|_K^2
\\
&\leq
\sum_{K \in \CT_h} (d+1) \sum_{\ba \in \CV_K}\|\grad(\bII^p(\pa\bpi^p u) - s_h^\ba((\bpi^p u)|_\oa))\|_K^2
\\
&=
(d+1) \sum_{\ba \in \CV_h}\|\bgrad(\bII^p(\pa\bpi^p u) - s_h^\ba((\bpi^p u)|_\oa))\|_\oa^2
\\
&\leq
(d+1) \sum_{\ba \in \CV_h} (\ra\la)^2 \|\bgrad(u-\bpi^p u)\|_\oa^2
\\
&\leq
\max_{\ba \in \CV_h} (\ra\la)^2(d+1)^2\|\bgrad(u-\bpi^p u)\|_\Omega^2.
\end{split}\end{equation}
\end{proof}

\subsection{Proof of $L^2$ error estimates of Theorem~\ref{thm_error_L2}}

We finally establish the local and global error estimates presented
in Theorem~\ref{thm_error_L2}. We start by establishing
a Poincar\'e inequality.

\begin{lemma}[Element-wise Poincar\'e]
\label{lemma_poincare_elem}
Consider an element $K \in \CT_h$, one of its vertices $\ba \in \CV_K$
and the face $F \in \CF_K$ opposite $\ba$. Then, if $v \in H^1(K)$
satisfies $v = 0$ on $F$, we have
\begin{equation}
\label{eq_poincare_elem}
\|v\|_K
\leq
\frac{2h_K}{d} \|\grad v\|_K.
\end{equation}
\end{lemma}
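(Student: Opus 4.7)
The plan is to use an integration by parts argument with a carefully chosen vector field $\bm{\eta}$ on $K$, exploiting the vanishing of $v$ on $F$ together with a geometric cancellation on the remaining faces.

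Specifically, I would take $\bm{\eta}(\bx) \eq (\bx - \ba)/d$. Two properties make this field convenient: first, $\div \bm{\eta} = 1$, so that $\|v\|_K^2 = (v^2, \div \bm{\eta})_K$; second, on every face $F' \in \CF_K$ with $\ba \in \overline{F'}$ (i.e., every face \emph{other than} $F$), the vector $\bx - \ba$ is tangential to $F'$ for $\bx \in F'$, hence $\bm{\eta} \cdot \bn = 0$ on $F'$. These two facts together form the geometric heart of the proof.

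With these preparations in hand, I would integrate by parts,
\begin{equation*}
\|v\|_K^2 = \int_K v^2 \div \bm{\eta} \, d\bx
= -2\int_K v\,(\grad v) \cdot \bm{\eta} \, d\bx
+ \int_{\partial K} v^2 \,\bm{\eta} \cdot \bn \, d\sigma.
\end{equation*}
The boundary term vanishes: on the face $F$ opposite $\ba$ we use $v|_F = 0$, and on the remaining faces we use the tangentiality of $\bm{\eta}$ established above. Then by Cauchy--Schwarz and the obvious pointwise bound $|\bm{\eta}(\bx)| = |\bx - \ba|/d \leq h_K/d$, we obtain
\begin{equation*}
\|v\|_K^2 \leq 2 \|v\|_K\,\|\grad v\|_K\,\|\bm{\eta}\|_{\BL^\infty(K)} \leq \frac{2 h_K}{d}\|v\|_K\,\|\grad v\|_K,
\end{equation*}
from which~\eqref{eq_poincare_elem} follows after dividing by $\|v\|_K$ (the degenerate case $\|v\|_K = 0$ being trivial).

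There is no real obstacle here; the only step that might deserve a short justification is the geometric observation that $\bn \cdot (\bx - \ba) = 0$ on each face $F'$ containing $\ba$, but this is just the statement that $F'$ lies in an affine hyperplane passing through $\ba$, so any vector with its tail at $\ba$ and head in $F'$ is tangential to $F'$.
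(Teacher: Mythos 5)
Your proof is correct and follows essentially the same approach as the paper: the only cosmetic difference is that you rescale the vector field to $\bm{\eta} = (\bx-\ba)/d$ so that $\div\bm{\eta}=1$, whereas the paper works with $\by = \bx-\ba$ and $\div\by = d$ directly, but the integration by parts, the vanishing of the boundary term via tangentiality plus $v|_F=0$, and the final Cauchy--Schwarz bound are identical.
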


\begin{proof}
Let $\by: \bx \to \bx-\ba$. Since $\div \by = d$, we have
\begin{equation*}
d \int_K |v|^2
=
\int_K \div \by |v|^2
=
\int_{\partial K} \by {\cdot} \bn|v|^2
-
2 \int_K \by {\cdot} \grad v v.
\end{equation*}
We then observe that the boundary term vanishes as $\by {\cdot} \bn = 0$
on all the faces $F' \in \CF_K$ sharing the vertex $\ba$, and $v = 0$
on the remaining face $F$. We then use that $|\by| \leq h_K$,
and therefore
\begin{equation*}
d\|v\|_K^2 \leq 2h_K \|\grad v\|_K\|v\|_K
\end{equation*}
using the Cauchy--Schwarz inequality. The result follows.
\end{proof}

\begin{corollary}[Closeness to the local-best approximation in $L^2$]
\label{cor_error_L2}
For all $\ba \in \CV_h$, we have
\begin{equation}
\label{eq_error_L2_patch}
\|\bII^p(\pa\bpi^p u)-s_h^{\ba}(\bpi^p u)|_\oa\|_{\oa}
\leq
\frac{2}{d} h_{\ba} 
\|\bgrad(\bII^p(\pa\bpi^p u)-s_h^{\ba}(\bpi^p u)|_\oa)\|_{\oa}.
\end{equation}
In addition, the estimates
\begin{equation}
\label{eq_error_L2_elem}
\|\bpi^p u - \CJ_h^p u\|_K
\leq
\frac{2}{d}
\sum_{\ba \in \CV_K} \ra\la h_{\ba} \| \bgrad(u-\bpi^p u)\|_{\oa}
\end{equation}
and
\begin{equation}
\label{eq_error_L2_domain}
\|\bpi^p u - \CJ_h^p u\|_\Omega
\leq
\frac{2}{d} \sqrt{d+1} c_\Omega
\left\{\sum_{\ba \in \CV_h} h_{\ba}^2 \|\bgrad( u-\bpi^p u)\|_\oa^2\right\}^{1/2}
\end{equation}
hold true.
\end{corollary}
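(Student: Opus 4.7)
The plan is to prove the three inequalities in order, with~\eqref{eq_error_L2_patch} serving as the local building block and the remaining two being structural consequences closely mirroring the argument already used for the $H^1$-bound in Theorem~\ref{thm_error_H1}.

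For~\eqref{eq_error_L2_patch}, the key geometric observation to exploit is that, for every element $K \in \CTa$, the face $F \in \CF_K$ opposite to the vertex $\ba$ lies in $\partial\oa$: indeed any neighbor of $K$ across $F$ does not contain $\ba$ and hence does not belong to $\CTa$. On this face the difference $\bII^p(\pa\bpi^p u) - s_h^{\ba}(\bpi^p u)|_\oa$ vanishes, because $\pa|_F \equiv 0$ (so the Lagrange interpolant, which uses only nodal values on $\overline K$, also vanishes on $F$) and because $s_h^{\ba}(\bpi^p u)|_\oa$ vanishes on the entire boundary $\partial\oa$ by construction, see~\eqref{eq_sha}. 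I would then apply the element-wise Poincar\'e inequality of Lemma~\ref{lemma_poincare_elem} with the distinguished face $F$, obtaining
\[
    \|\bII^p(\pa\bpi^p u) - s_h^{\ba}(\bpi^p u)|_\oa\|_K
    \leq \frac{2h_K}{d}\|\grad(\bII^p(\pa\bpi^p u) - s_h^{\ba}(\bpi^p u)|_\oa)\|_K,
\]
square, sum over $K \in \CTa$, and bound $h_K \leq h_\ba$ via~\eqref{eq_ha} to conclude.

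For~\eqref{eq_error_L2_elem}, I would start from the key identity~\eqref{eq_Ip_PU} already used in the proof of Theorem~\ref{thm_error_H1}, rewriting
\[
    (\bpi^p u - \CJ_h^p u)|_K
    = \sum_{\ba \in \CV_K} \bigl(\bII^p(\pa\bpi^p u) - s_h^\ba((\bpi^p u)|_\oa)\bigr)\big|_K,
\]
the sum being restricted to vertices of $K$ since the remaining terms have support outside $K$. The plain triangle inequality on $K$, extension of each $L^2$-norm from $K$ to $\oa$, and a chain of~\eqref{eq_error_L2_patch} followed by~\eqref{eq_globloc_patch_la} then yields the desired bound in one line.

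For~\eqref{eq_error_L2_domain}, I would replicate the structure of~\eqref{eq_Om_gr}: use the same pointwise decomposition, apply the Cauchy--Schwarz inequality on $K$ to produce a factor $d+1$, sum over elements and swap the summation to arrive at $\sum_{\ba \in \CV_h}\|\bII^p(\pa\bpi^p u) - s_h^\ba((\bpi^p u)|_\oa)\|_\oa^2$, then apply~\eqref{eq_error_L2_patch} and~\eqref{eq_globloc_patch_la}, finally bounding $\ra\la \leq c_\Omega$ uniformly. Only the main inequality for $\|\bpi^p u - \CJ_h^p u\|_\Omega$ then remains, and~\eqref{eq_error_L2_global} of Theorem~\ref{thm_error_L2} is recovered by combining this with the element-wise Poincar\'e bound $\|u-\bpi^p u\|_K \leq (h_K/\pi)\|\grad(u-\bpi^p u)\|_K$ inside the triangle inequality $\|u-\CJ_h^p u\|_\Omega \leq \|u-\bpi^p u\|_\Omega + \|\bpi^p u-\CJ_h^p u\|_\Omega$. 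The main (only) subtlety is the face-vanishing argument in~\eqref{eq_error_L2_patch}; everything else is bookkeeping that carries over verbatim from the proof of Theorem~\ref{thm_error_H1}.
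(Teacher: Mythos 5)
Your proposal is correct and follows essentially the same route as the paper: the face-vanishing observation together with Lemma~\ref{lemma_poincare_elem} element-by-element for~\eqref{eq_error_L2_patch}, the decomposition from~\eqref{eq_Ip_PU} with the triangle inequality and~\eqref{eq_globloc_patch_la} for~\eqref{eq_error_L2_elem}, and the Cauchy--Schwarz/sum-swap argument mirroring~\eqref{eq_Om_gr} for~\eqref{eq_error_L2_domain}. The only (harmless) deviation is that for~\eqref{eq_error_L2_elem} you invoke the already-proved patch estimate~\eqref{eq_error_L2_patch} rather than re-applying Lemma~\ref{lemma_poincare_elem} on the single element $K$ as the paper does, which costs $h_\ba$ in place of $h_K$ before both are anyway bounded by $h_\ba$.
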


\begin{proof}
The estimate in~\eqref{eq_error_L2_patch} simply follows by applying
Lemma~\ref{lemma_poincare_elem} in each element of the patch,
noting that, for each $K \in \CTa$, both $\bII^p(\pa\bpi^p u)$ and $s_h^{\ba}(\bpi^p u)|_\oa$
vanish on the face of $K$ opposite to $\ba$, see~\eqref{eq_Ip} and~\eqref{eq_sha}. 

For the second estimate~\eqref{eq_error_L2_elem}, we observe
from~\eqref{eq_Ip_PU} and~\eqref{eq_definition_jj} that
\begin{equation*}
(\bpi^p u - \CJ_h^p u)|_K
=
\sum_{\ba \in \CV_K} \left \{\bII^p(\pa \bpi^p u) - s_h^{\ba}(\bpi^p u)|_\oa\right \}|_K
\end{equation*}
and therefore, using the triangle inequality together with~\eqref{eq_ha}
and~\eqref{eq_globloc_patch_la},
\begin{align*}
\|\bpi^p u - \CJ_h^p u\|_K
&\leq
\sum_{\ba \in \CV_K}\|\bII^p(\pa \bpi^p u) - s_h^{\ba}(\bpi^p u)|_\oa\|_K\\
&\leq
\frac{2}{d}
\sum_{\ba \in \CV_K} h_K \| \bgrad(\bII^p(\pa \bpi^p u) - s_h^{\ba}(\bpi^p u)|_\oa)\|_{K},\\
&\leq
\frac{2}{d}
\sum_{\ba \in \CV_K} h_{\ba} \|\bgrad(\bII^p(\pa \bpi^p u) - s_h^{\ba}(\bpi^p u)|_\oa)\|_{\oa} \\
&\leq \frac{2}{d} \sum_{\ba \in \CV_K} h_{\ba} \ra \la \|\bgrad(u-\bpi^p u)\|_\oa.
\end{align*}

To establish~\eqref{eq_error_L2_domain}, we estimate sharply, as in~\eqref{eq_Om_gr},
\begin{align*}
\|\bpi^p u - \CJ_h^p u\|_\Omega^2
&\stackrel{\mathmakebox[\widthof{x.xx}]{}}{\leq}
(d+1) \sum_{\ba \in \CV_h}\|\bII^p(\pa\bpi^p u) - s_h^\ba((\bpi^p u)|_\oa)\|_\oa^2
\\
&\stackrel{\eqref{eq_error_L2_patch}}{\leq}
\frac{4}{d^2} (d+1) \sum_{\ba \in \CV_h} h_{\ba}^2
\| \bgrad( \bII^p(\pa\bpi^p u) - s_h^\ba((\bpi^p u)|_\oa))\|_\oa^2
\\
&\stackrel{\eqref{eq_globloc_patch_la}}{\leq}
\frac{4}{d^2} (d+1) \max_{\ba \in \CV_h}(\ra\la)^2
\sum_{\ba \in \CV_h} h_{\ba}^2 \|\bgrad( u-\bpi^p u)\|_\oa^2.
\end{align*}
\end{proof}

\begin{proof}[Proof of Theorem~\ref{thm_error_L2}]
The estimates follow from the ones in Corollary~\ref{cor_error_L2},
triangle inequalities, and the fact that
\begin{equation*}
\|u-\bpi^p u\|_{K} \leq \frac{h_K}{\pi} \|\grad(u-\bpi^p u)\|_{K}
\end{equation*}
for all $K \in \CT_h$.
For~\eqref{eq_error_L2_global}, we namely use $\|u-\CJ_h^p u\|_\Omega \leq \|u-\bpi^p u  \|_\Omega+ \|\bpi^p u - \CJ_h^p u\|_\Omega$ together with~\eqref{eq_error_L2_domain} and 
\begin{align*}
\|u-\bpi^p u\|_\Omega^2 & \leq \sum_{K \in \CT_h} \frac{h_K^2}{\pi^2} \|\grad(u-\bpi^p u)\|_K^2 = 
\frac{1}{d+1}\sum_{\ba \in \CV_h} \sum_{K \in \CT_h | \ba \in \CV_K} \frac{h_K^2}{\pi^2} \|\grad(u-\bpi^p u)\|_K^2 \\
& \leq \frac{1}{\pi^2(d+1)} \sum_{\ba \in \CV_h} h_{\ba}^2 \|\bgrad( u-\bpi^p u)\|_\oa^2.
\end{align*}
\end{proof}

\section{Numerical examples} \label{sec_num}

We present here the results of several numerical experiments
illustrating the actual practical behavior of our quasi-interpolate.

\subsection{Setting}

For different two-dimensional domains $\Omega \subset \mathbb R^2$,
functions $u \in H^1_0(\Omega) \cap C^0(\overline{\Omega})$ with various additional regularity,
and triangular meshes $\CT_h$, we compute the (broken) local-best approximation
$\bpi^1 u \in \CP_1(\CT_h)$, 
\begin{equation*}
\bpi^1 u \eq \arg \min_{v_h \in \CP_1(\CT_h)} \|\bgrad(u-v_h)\|_\Omega,
\end{equation*}
the global-best approximation
\begin{equation*}
u_h^1 \eq \arg \min_{v_h \in \CP_1(\CT_h) \cap H^1_0(\Omega)} \|\grad(u-u_h^1)\|_\Omega,
\end{equation*}
the Lagrange interpolant $\bII^1 u$ of Section~\ref{sec_Ip}, and our projection $\CJ_h^1u$
of Section~\ref{sec_int}, both for the polynomial degree $p=1$.
Except from the local-best approximation, all these approximations of $u$
are conforming and sit in the finite element space $\CP_1(\CT_h) \cap H^1_0(\Omega)$.

We employ both structured and unstructured meshes $\CT_h$. Our structured
meshes are simply constructed by first building a Cartesian grid of squares, and
then breaking each square into four triangles by connected its vertices to its barycenter.
The unstructured meshes are generated by the {\tt mmg} software package \cite{dobrzynski_2012a}.
We use both uniform and adapted unstructured meshes. In the uniform case, the meshes are
simply generated using the {\tt -hmax} flag of {\tt mmg}, whereas the local mesh size is
specified through a ``metic file'' via the {\tt -sol} flag for adapted meshes.

For a given mesh $\CT_h$, $N \eq \dim \CP_1(\CT_h) \cap H^1_0(\Omega)$ denotes the number of
degrees of freedom of the associated finite element space. For a uniform mesh with
maximal size $h$, we have $N \sim (\operatorname{diam}(\Omega)/h)^2$.

For each configuration of domain and function, our results are summarized in a figure
with 4 panels. Panels (a) and (b) aim at comparing the quality of approximation of different
projectors. Specifically, the projection errors measured in the $H^1$ seminorm
and in the $L^2$ norm are respectively represented on panels (a) and (b).
On the other hand, in panels (c) and (d), we measure the quality of
our guaranteed upper bounds, using that the local-best approximation error
$\|\bgrad(u-\bpi^1 u)\|_\Omega$ is known. On panel (c), we plot
$\|\grad(u-\CJ_h^1 u)\|_\Omega$ and $\|u-\CJ_h^1 u\|_\Omega$ together with the
right-hand sides in the guaranteed estimations in~\eqref{eq_error_la_global}
and~\eqref{eq_error_L2_global}, written as
\begin{equation*}
\|\grad(u-\CJ_h^1 u)\|_\Omega
\leq
(1+(d+1)^2c_\Omega^{2})^{1/2}\|\bgrad(u-\bpi^1 u)\|_\Omega =: \eta_{H^1}(\bpi^1 u)
\end{equation*}
and
\begin{equation*}
\|u-\CJ_h^1 u\|_\Omega
\leq
\left (\frac{1}{\pi \sqrt{d+1}} + \frac{2}{d} \sqrt{d+1} c_\Omega \right )
\left\{\sum_{\ba \in \CV_h} h_{\ba}^2 \|\bgrad( u-\bpi^1 u)\|_\oa^2\right\}^{1/2} =: \eta_{L^2}(\bpi^1 u).
\end{equation*}
Finally, we plot the ratios on panel (d), thereby measuring the overestimation
factor in our guaranteed bounds.

We obtain similar results in all the test cases below, so that we give a discussion
in Section~\ref{section_numerics_comments}.

\subsection{Smooth function}

We start with the square $\Omega \eq (-1,1)^2$ and the smooth function
\begin{equation*}
u(\bx) = \sin(\pi\bx_1)\sin(\pi\bx_2).
\end{equation*}
We employ uniform unstructured meshes generated by {\tt mmg}.
The result are presented in Figure~\ref{figure_smooth}.

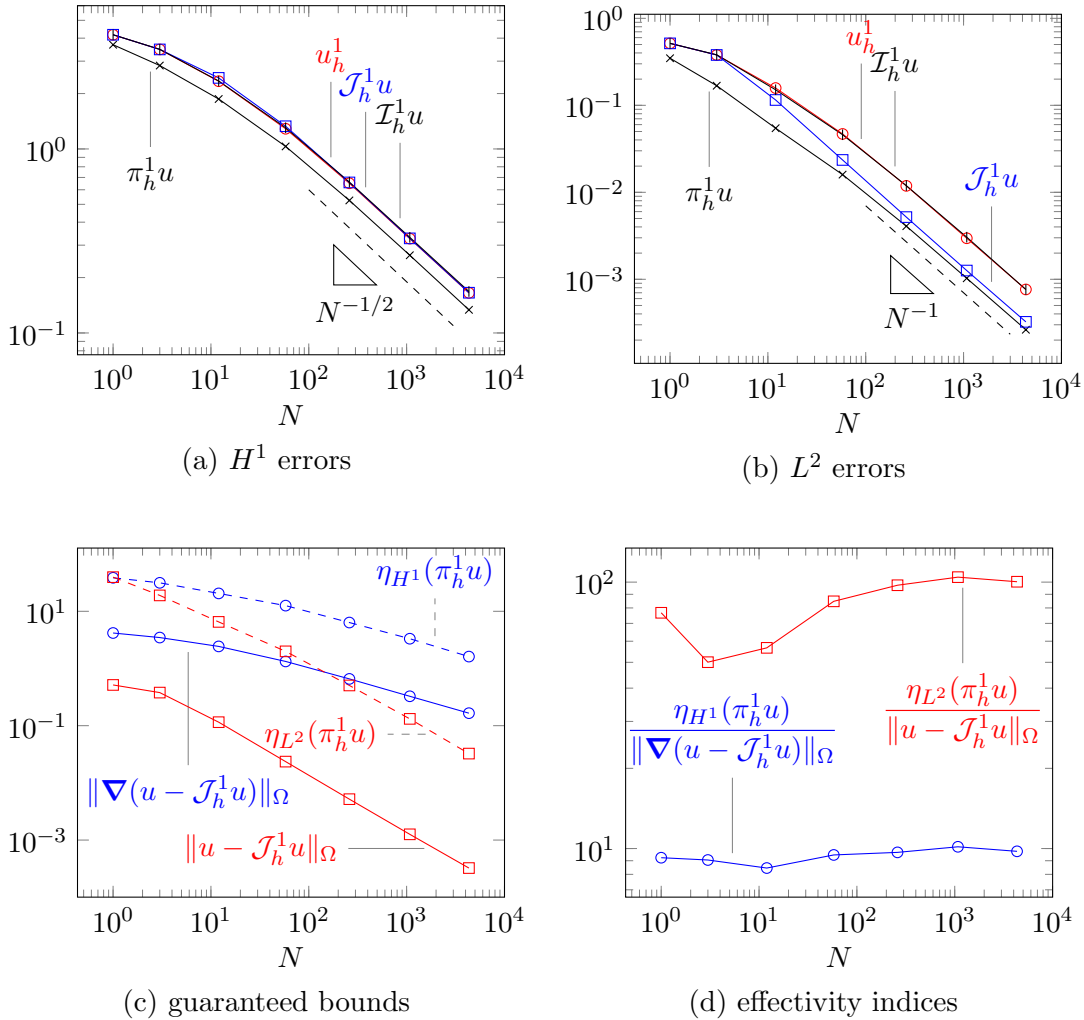
\begin{figure}
\begin{minipage}{.45\linewidth}
\centering
\begin{tikzpicture}
\begin{axis}
[
	width=\linewidth,
	xmode=log,
	ymode=log,
	xlabel={$N$}
]

\plot[black,mark=x     ] table[x=nrdofs,y=LB] {figures/smooth/data/errors_H1.txt}
	node[pos=0.1,pin={[pin distance=1.0cm]-90:{$\bpi^1 u$}}] {};
\plot[red  ,mark=o     ] table[x=nrdofs,y=GB] {figures/smooth/data/errors_H1.txt}
	node[pos=0.6,pin={[pin distance=1.0cm]90:{$u_h^1$}}] {};
\plot[blue ,mark=square] table[x=nrdofs,y=QI] {figures/smooth/data/errors_H1.txt}
	node[pos=0.7,pin={[pin distance=1.0cm]90:{$\CJ_h^1 u$}}] {};
\plot[black,mark=|     ] table[x=nrdofs,y=LI] {figures/smooth/data/errors_H1.txt}
	node[pos=0.8,pin={[pin distance=1.0cm]90:{$\bII^1 u$}}] {};

\SlopeTriangle{0.6}{-0.1}{0.2}{-0.5}{$N^{-1/2}$}{};
\plot[black,dashed,domain=1.e2:3.e3] {6*x^(-0.5)};

\end{axis}
\end{tikzpicture}

(a) $H^1$ errors
\end{minipage}
\begin{minipage}{.45\linewidth}
\centering
\begin{tikzpicture}
\begin{axis}
[
	width=\linewidth,
	xmode=log,
	ymode=log,
	xlabel={$N$}
]

\plot[black,mark=x     ] table[x=nrdofs,y=LB ] {figures/smooth/data/errors_L2.txt}
	node[pos=0.1,pin={[pin distance=1.0cm]-90:{$\bpi^1 u$}}] {};
\plot[red  ,mark=o     ] table[x=nrdofs,y=GB ] {figures/smooth/data/errors_L2.txt}
	node[pos=0.5,pin={[pin distance=1.0cm]90:{$u_h^1$}}] {};
\plot[blue ,mark=square] table[x=nrdofs,y=QI ] {figures/smooth/data/errors_L2.txt}
	node[pos=0.9,pin={[pin distance=1.0cm]90:{$\CJ_h^1 u$}}] {};
\plot[black,mark=|     ] table[x=nrdofs,y=LI ] {figures/smooth/data/errors_L2.txt}
	node[pos=0.6,pin={[pin distance=1.0cm]90:{$\bII^1 u$}}] {};
% \plot[black,mark=*     ] table[x=nrdofs,y=hLB] {figures/smooth/data/errors_L2.txt};

\SlopeTriangle{0.6}{-0.1}{0.2}{-1}{$N^{-1}$}{};
\plot[black,dashed,domain=1.e2:3.e3] {0.7*x^(-1)};

\end{axis}
\end{tikzpicture}

(b) $L^2$ errors
\end{minipage}

\vspace*{0.8cm}

\begin{minipage}{.45\linewidth}
\centering
\begin{tikzpicture}
\begin{axis}
[
	width=\linewidth,
	xmode=log,
	ymode=log,
	xlabel={$N$}
]

\plot[blue,mark=o     ,mark options={solid}       ] table[x=nrdofs,y=eH1] {figures/smooth/data/ratios.txt}
	node[pos=0.2,pin={[pin distance=1.5cm]-90:{$\|\grad(u-\CJ_h^1 u)\|_\Omega$}}] {};
\plot[blue,mark=o     ,mark options={solid},dashed] table[x=nrdofs,y=EH1] {figures/smooth/data/ratios.txt}
	node[pos=0.9,pin={[pin distance=0.5cm]90:{$\eta_{H^1}(\bpi^1 u)$}}] {};
\plot[red ,mark=square,mark options={solid}       ] table[x=nrdofs,y=eL2] {figures/smooth/data/ratios.txt}
	node[pos=0.9,pin={[pin distance=1cm]180:{$\|u-\CJ_h^1 u\|_\Omega$}}] {};
\plot[red ,mark=square,mark options={solid},dashed] table[x=nrdofs,y=EL2] {figures/smooth/data/ratios.txt}
	node[pos=0.9,pin={[pin distance=0.5cm]180:{$\eta_{L^2}(\bpi^1 u)$}}] {};

\end{axis}
\end{tikzpicture}

(c) guaranteed bounds 
\end{minipage}
\begin{minipage}{.45\linewidth}
\centering
\begin{tikzpicture}
\begin{axis}
[
	width=\linewidth,
	xmode=log,
	ymode=log,
	xlabel={$N$}
]

\plot[blue,mark=o     ] table[x=nrdofs,y expr=\thisrow{EH1}/\thisrow{eH1}] {figures/smooth/data/ratios.txt}
	node[pos=0.2,pin={[pin distance=1.0cm]90:{$\displaystyle \frac{\eta_{H^1}(\bpi^1 u)}{\|\grad(u-\CJ_h^1 u)\|_\Omega}$}}] {};
\plot[red ,mark=square] table[x=nrdofs,y expr=\thisrow{EL2}/\thisrow{eL2}] {figures/smooth/data/ratios.txt}
	node[pos=0.85,pin={[pin distance=1.0cm]-90:{$\displaystyle \frac{\eta_{L^2}(\bpi^1 u)}{\|u-\CJ_h^1 u\|_\Omega}$}}] {};

\end{axis}
\end{tikzpicture}

(d) effectivity indices
\end{minipage}
\caption{Smooth solution example}
\label{figure_smooth}
\end{figure}

\subsection{Circular irregularity}

We keep the domain $\Omega \eq (-1,1)^2$ and consider the function
\begin{equation*}
u(\bx) = (1-|\bx|) \mathbf 1_{|\bx| < 1}.
\end{equation*}
This function is $C^0$, but not $C^1$. In fact, the gradient has a
line of discontinuity across the unit circle, so that $h^{1/2}$
and $h^{3/2}$ (i.e. $N^{-1/4}$ and $N^{-3/4}$) rates are expected
for the $H^1$ and $L^2$ errors.  We employ uniform structured meshes.
Figure~\ref{figure_circle} summarizes the results.

\begin{figure}
\begin{minipage}{.45\linewidth}
\centering
\begin{tikzpicture}
\begin{axis}
[
	width=\linewidth,
	xmode=log,
	ymode=log,
	xlabel={$N$}
]

\plot[black,mark=x     ] table[x=nrdofs,y=LB] {figures/circle/data/errors_H1.txt}
	node[pos=0.1,pin={[pin distance=1.0cm]-90:{$\bpi^1 u$}}] {};
\plot[red  ,mark=o     ] table[x=nrdofs,y=GB] {figures/circle/data/errors_H1.txt}
	node[pos=0.6,pin={[pin distance=1.0cm]90:{$u_h^1$}}] {};
\plot[blue ,mark=square] table[x=nrdofs,y=QI] {figures/circle/data/errors_H1.txt}
	node[pos=0.7,pin={[pin distance=1.0cm]90:{$\CJ_h^1 u$}}] {};
\plot[black,mark=|     ] table[x=nrdofs,y=LI] {figures/circle/data/errors_H1.txt}
	node[pos=0.8,pin={[pin distance=1.0cm]90:{$\bII^1 u$}}] {};

\SlopeTriangle{0.5}{-0.15}{0.2}{-0.25}{$N^{-1/4}$}{};
\plot[black,dashed,domain=1.e2:3.e3] {1*x^(-0.25)};

\end{axis}
\end{tikzpicture}

(a) $H^1$ errors
\end{minipage}
\begin{minipage}{.45\linewidth}
\centering
\begin{tikzpicture}
\begin{axis}
[
	width=\linewidth,
	xmode=log,
	ymode=log,
	xlabel={$N$}
]

\plot[black,mark=x     ] table[x=nrdofs,y=LB ] {figures/circle/data/errors_L2.txt}
	node[pos=0.1,pin={[pin distance=1.0cm]-90:{$\bpi^1 u$}}] {};
\plot[red  ,mark=o     ] table[x=nrdofs,y=GB ] {figures/circle/data/errors_L2.txt}
	node[pos=0.5,pin={[pin distance=1.0cm]90:{$u_h^1$}}] {};
\plot[blue ,mark=square] table[x=nrdofs,y=QI ] {figures/circle/data/errors_L2.txt}
	node[pos=0.9,pin={[pin distance=1.0cm]90:{$\CJ_h^1 u$}}] {};
\plot[black,mark=|     ] table[x=nrdofs,y=LI ] {figures/circle/data/errors_L2.txt}
	node[pos=0.6,pin={[pin distance=1.0cm]90:{$\bII^1 u$}}] {};
% \plot[black,mark=*     ] table[x=nrdofs,y=hLB] {figures/circle/data/errors_L2.txt};

\SlopeTriangle{0.55}{-0.15}{0.2}{-0.75}{$N^{-3/4}$}{};
\plot[black,dashed,domain=1.e2:3.e3] {0.2*x^(-0.75)};

\end{axis}
\end{tikzpicture}

(b) $L^2$ errors
\end{minipage}

\vspace*{0.8cm}

\begin{minipage}{.45\linewidth}
\centering
\begin{tikzpicture}
\begin{axis}
[
	width=\linewidth,
	xmode=log,
	ymode=log,
	xlabel={$N$}
]

\plot[blue,mark=o     ,mark options={solid}       ] table[x=nrdofs,y=eH1] {figures/circle/data/ratios.txt}
	node[pos=0.2,pin={[pin distance=1.5cm]-90:{$\|\grad(u-\CJ_h^1 u)\|_\Omega$}}] {};
\plot[blue,mark=o     ,mark options={solid},dashed] table[x=nrdofs,y=EH1] {figures/circle/data/ratios.txt}
	node[pos=0.9,pin={[pin distance=0.5cm]90:{$\eta_{H^1}(\bpi^1 u)$}}] {};
\plot[red ,mark=square,mark options={solid}       ] table[x=nrdofs,y=eL2] {figures/circle/data/ratios.txt}
	node[pos=0.9,pin={[pin distance=1cm]180:{$\|u-\CJ_h^1 u\|_\Omega$}}] {};
\plot[red ,mark=square,mark options={solid},dashed] table[x=nrdofs,y=EL2] {figures/circle/data/ratios.txt}
	node[pos=0.9,pin={[pin distance=0.5cm]180:{$\eta_{L^2}(\bpi^1 u)$}}] {};

\end{axis}
\end{tikzpicture}

(c) guaranteed bounds 
\end{minipage}
\begin{minipage}{.45\linewidth}
\centering
\begin{tikzpicture}
\begin{axis}
[
	width=\linewidth,
	xmode=log,
	ymode=log,
	xlabel={$N$}
]

\plot[blue,mark=o     ] table[x=nrdofs,y expr=\thisrow{EH1}/\thisrow{eH1}] {figures/circle/data/ratios.txt}
	node[pos=0.2,pin={[pin distance=1.0cm]90:{$\displaystyle \frac{\eta_{H^1}(\bpi^1 u)}{\|\grad(u-\CJ_h^1 u)\|_\Omega}$}}] {};
\plot[red ,mark=square] table[x=nrdofs,y expr=\thisrow{EL2}/\thisrow{eL2}] {figures/circle/data/ratios.txt}
	node[pos=0.85,pin={[pin distance=1.0cm]-90:{$\displaystyle \frac{\eta_{L^2}(\bpi^1 u)}{\|u-\CJ_h^1 u\|_\Omega}$}}] {};

\end{axis}
\end{tikzpicture}

(d) effectivity indices
\end{minipage}
\caption{Circular interface solution example}
\label{figure_circle}
\end{figure}
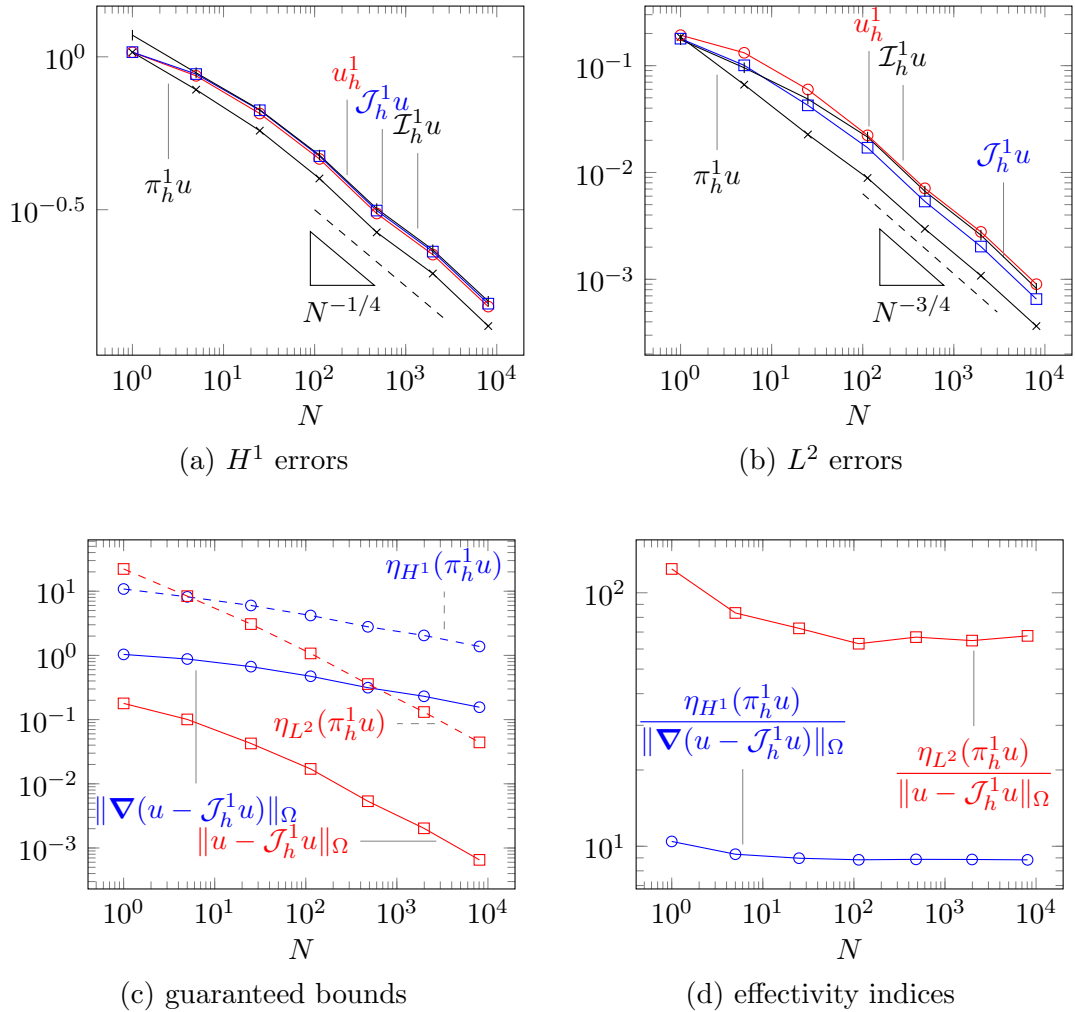

\subsection{Corner singularity}

We now consider the $L$-shape domain $\Omega \eq (-1,1)^2 \setminus [0,1]^2$,
and the standard corner singularity function
\begin{equation*}
u(\bx) = \chi(\bx)|\bx|^\alpha \sin(\alpha\theta(\bx))
\end{equation*}
where $\alpha \eq 2/3$ and $\theta(\bx)$ is an angular variable
defined such that $\theta = 0$ for $\bx \in (0,1) \times \{0\}$
and $3\pi/2$ when $\bx \in \{0\} \times (0,1)$ and
\begin{equation*}
\chi(\bx) \eq (1-|\bx|^2) \mathbf 1_{|\bx| < 1}
\end{equation*}
is a cutoff function that enforces boundary conditions. We first use
uniform meshes generated by {\tt mmg} and present the corresponding
results in Figure~\ref{figure_lshape}. In this case, we expect and observe convergence
rates of order $N^{-1/3}$ and $N^{-5/6}$ in the $H^1$ and $L^2$ norm.
We then employed adapted meshes (still generated by {\tt mmg}) with corresponding
results presented in Figure~\ref{figure_lshape_adapted}. Optimal convergence
rates are expected and observed in this case. The adapted meshes are generated by requiring
that $h_K \lesssim \max(h_{\rm max}^{3/2},|\bx_K|^{1/3} h_{\rm max})$ for all
$K \in \CT_h$, where $\bx_K$ is the barycenter of $K$.

\begin{figure}
\begin{minipage}{.45\linewidth}
\centering
\begin{tikzpicture}
\begin{axis}
[
	width=\linewidth,
	xmode=log,
	ymode=log,
	xlabel={$N$}
]

\plot[black,mark=x     ] table[x=nrdofs,y=LB] {figures/lshape/data/errors_H1.txt}
	node[pos=0.1,pin={[pin distance=1.0cm]-90:{$\bpi^1 u$}}] {};
\plot[red  ,mark=o     ] table[x=nrdofs,y=GB] {figures/lshape/data/errors_H1.txt}
	node[pos=0.6,pin={[pin distance=1.0cm]90:{$u_h^1$}}] {};
\plot[blue ,mark=square] table[x=nrdofs,y=QI] {figures/lshape/data/errors_H1.txt}
	node[pos=0.7,pin={[pin distance=1.0cm]90:{$\CJ_h^1 u$}}] {};
\plot[black,mark=|     ] table[x=nrdofs,y=LI] {figures/lshape/data/errors_H1.txt}
	node[pos=0.8,pin={[pin distance=1.0cm]90:{$\bII^1 u$}}] {};

\SlopeTriangle{0.5}{-0.15}{0.2}{-0.333333}{$N^{-1/3}$}{};
\plot[black,dashed,domain=1.e2:3.e3] {0.4*x^(-0.33333333)};

\end{axis}
\end{tikzpicture}

(a) $H^1$ errors
\end{minipage}
\begin{minipage}{.45\linewidth}
\centering
\begin{tikzpicture}
\begin{axis}
[
	width=\linewidth,
	xmode=log,
	ymode=log,
	xlabel={$N$}
]

\plot[black,mark=x     ] table[x=nrdofs,y=LB ] {figures/lshape/data/errors_L2.txt}
	node[pos=0.1,pin={[pin distance=1.0cm]-90:{$\bpi^1 u$}}] {};
\plot[red  ,mark=o     ] table[x=nrdofs,y=GB ] {figures/lshape/data/errors_L2.txt}
	node[pos=0.9,pin={[pin distance=0.5cm]-90:{$u_h^1$}}] {};
\plot[blue ,mark=square] table[x=nrdofs,y=QI ] {figures/lshape/data/errors_L2.txt}
	node[pos=0.9,pin={[pin distance=0.5cm]90:{$\CJ_h^1 u$}}] {};
\plot[black,mark=|     ] table[x=nrdofs,y=LI ] {figures/lshape/data/errors_L2.txt}
	node[pos=0.6,pin={[pin distance=0.5cm]90:{$\bII^1 u$}}] {};
% \plot[black,mark=*     ] table[x=nrdofs,y=hLB] {figures/lshape/data/errors_L2.txt};

\SlopeTriangle{0.55}{-0.15}{0.15}{-0.83333333333333333333}{$N^{-5/6}$}{};
\plot[black,dashed,domain=1.e2:3.e3] {0.08*x^(-.83333333333333333333)};

\end{axis}
\end{tikzpicture}

(b) $L^2$ errors
\end{minipage}

\vspace*{0.8cm}

\begin{minipage}{.45\linewidth}
\centering
\begin{tikzpicture}
\begin{axis}
[
	width=\linewidth,
	xmode=log,
	ymode=log,
	xlabel={$N$}
]

\plot[blue,mark=o     ,mark options={solid}       ] table[x=nrdofs,y=eH1] {figures/lshape/data/ratios.txt}
	node[pos=0.2,pin={[pin distance=1.5cm]-90:{$\|\grad(u-\CJ_h^1 u)\|_\Omega$}}] {};
\plot[blue,mark=o     ,mark options={solid},dashed] table[x=nrdofs,y=EH1] {figures/lshape/data/ratios.txt}
	node[pos=0.9,pin={[pin distance=0.5cm]90:{$\eta_{H^1}(\bpi^1 u)$}}] {};
\plot[red ,mark=square,mark options={solid}       ] table[x=nrdofs,y=eL2] {figures/lshape/data/ratios.txt}
	node[pos=0.9,pin={[pin distance=1cm]180:{$\|u-\CJ_h^1 u\|_\Omega$}}] {};
\plot[red ,mark=square,mark options={solid},dashed] table[x=nrdofs,y=EL2] {figures/lshape/data/ratios.txt}
	node[pos=0.9,pin={[pin distance=0.5cm]180:{$\eta_{L^2}(\bpi^1 u)$}}] {};

\end{axis}
\end{tikzpicture}

(c) guaranteed bounds 
\end{minipage}
\begin{minipage}{.45\linewidth}
\centering
\begin{tikzpicture}
\begin{axis}
[
	width=\linewidth,
	xmode=log,
	ymode=log,
	xlabel={$N$}
]

\plot[blue,mark=o     ] table[x=nrdofs,y expr=\thisrow{EH1}/\thisrow{eH1}] {figures/lshape/data/ratios.txt}
	node[pos=0.2,pin={[pin distance=1.0cm]90:{$\displaystyle \frac{\eta_{H^1}(\bpi^1 u)}{\|\grad(u-\CJ_h^1 u)\|_\Omega}$}}] {};
\plot[red ,mark=square] table[x=nrdofs,y expr=\thisrow{EL2}/\thisrow{eL2}] {figures/lshape/data/ratios.txt}
	node[pos=0.85,pin={[pin distance=1.0cm]-90:{$\displaystyle \frac{\eta_{L^2}(\bpi^1 u)}{\|u-\CJ_h^1 u\|_\Omega}$}}] {};

\end{axis}
\end{tikzpicture}

(d) effectivity indices
\end{minipage}
\caption{Singular function example}
\label{figure_lshape}
\end{figure}
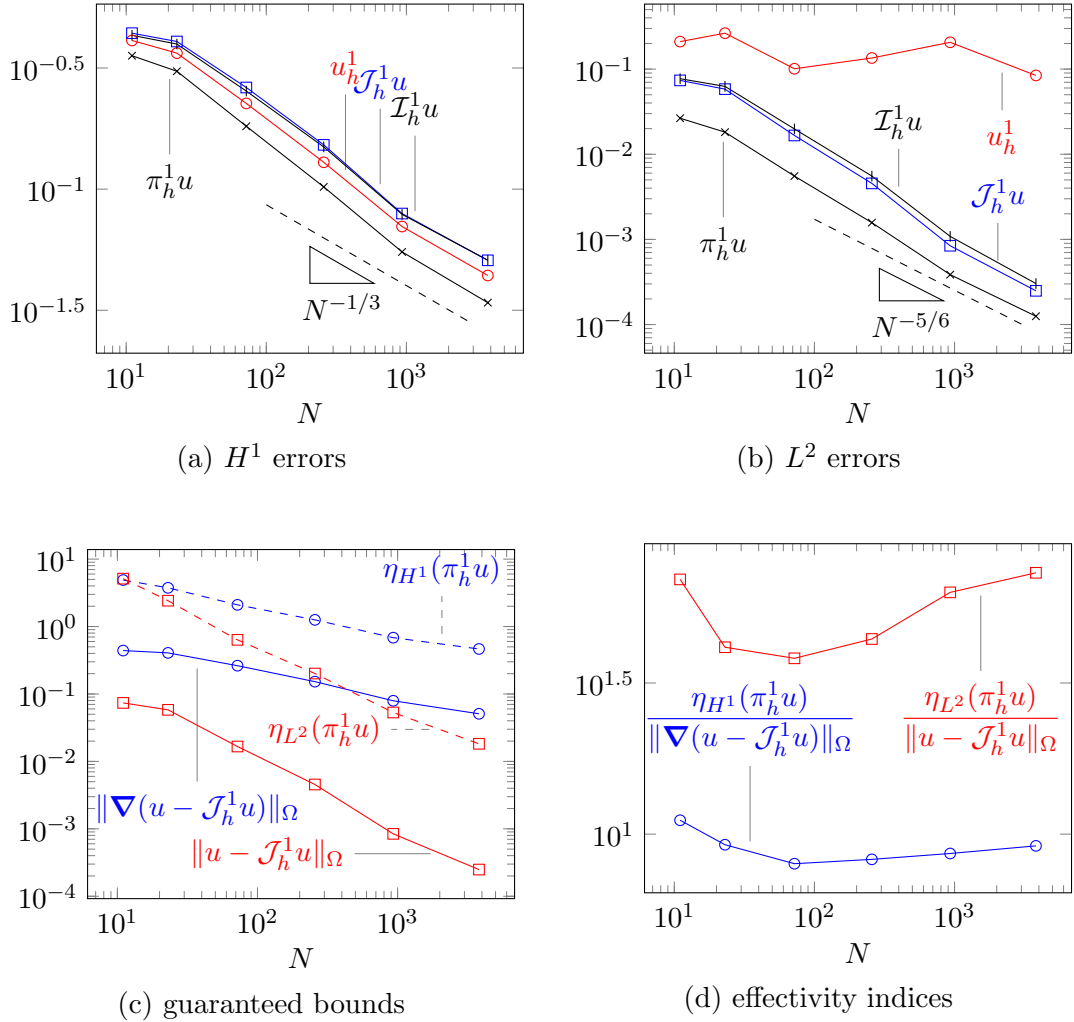

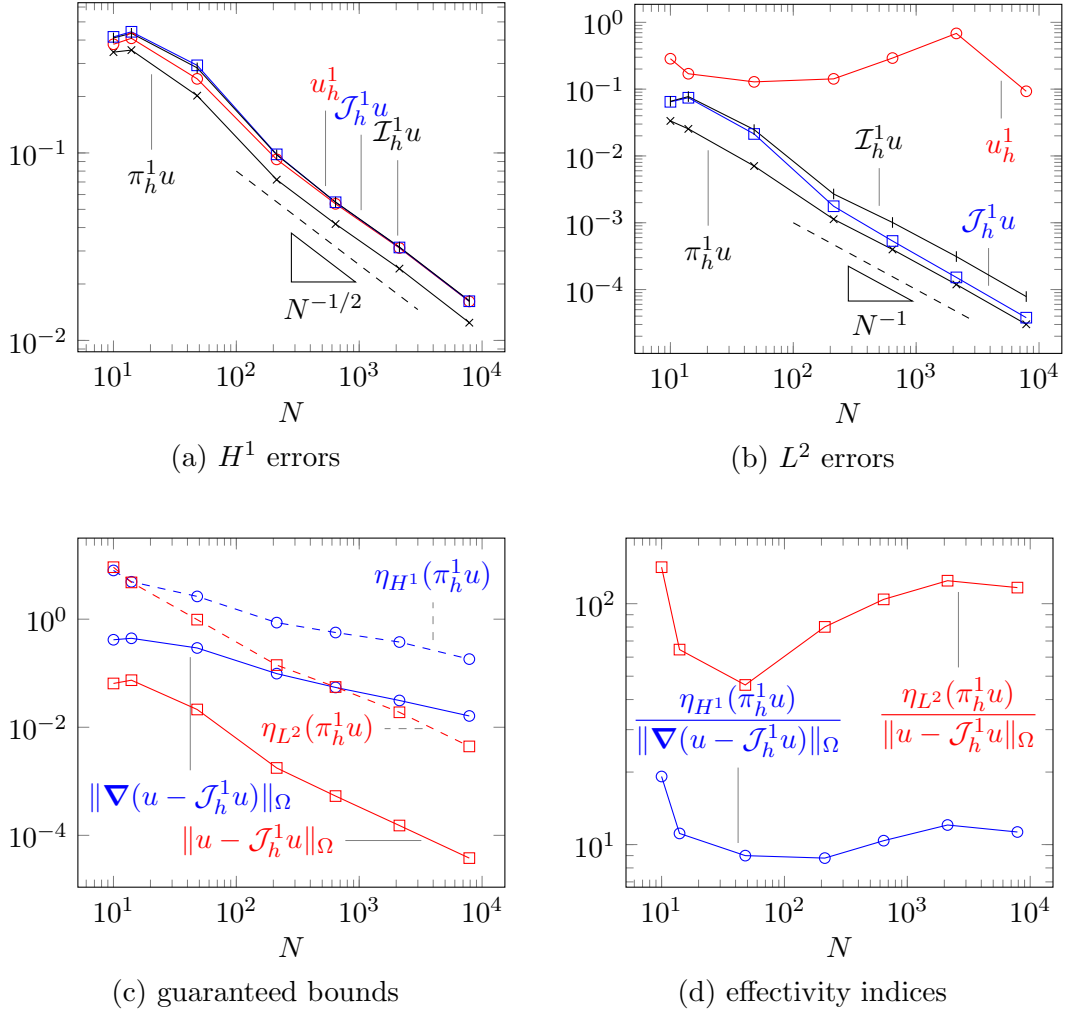
\begin{figure}
\begin{minipage}{.45\linewidth}
\centering
\begin{tikzpicture}
\begin{axis}
[
	width=\linewidth,
	xmode=log,
	ymode=log,
	xlabel={$N$}
]

\plot[black,mark=x     ] table[x=nrdofs,y=LB] {figures/lshape_adapted/data/errors_H1.txt}
	node[pos=0.1,pin={[pin distance=1.0cm]-90:{$\bpi^1 u$}}] {};
\plot[red  ,mark=o     ] table[x=nrdofs,y=GB] {figures/lshape_adapted/data/errors_H1.txt}
	node[pos=0.6,pin={[pin distance=1.0cm]90:{$u_h^1$}}] {};
\plot[blue ,mark=square] table[x=nrdofs,y=QI] {figures/lshape_adapted/data/errors_H1.txt}
	node[pos=0.7,pin={[pin distance=1.0cm]90:{$\CJ_h^1 u$}}] {};
\plot[black,mark=|     ] table[x=nrdofs,y=LI] {figures/lshape_adapted/data/errors_H1.txt}
	node[pos=0.8,pin={[pin distance=1.0cm]90:{$\bII^1 u$}}] {};

\SlopeTriangle{0.5}{-0.15}{0.2}{-0.5}{$N^{-1/2}$}{};
\plot[black,dashed,domain=1.e2:3.e3] {0.8*x^(-0.5)};

\end{axis}
\end{tikzpicture}

(a) $H^1$ errors
\end{minipage}
\begin{minipage}{.45\linewidth}
\centering
\begin{tikzpicture}
\begin{axis}
[
	width=\linewidth,
	xmode=log,
	ymode=log,
	xlabel={$N$}
]

\plot[black,mark=x     ] table[x=nrdofs,y=LB ] {figures/lshape_adapted/data/errors_L2.txt}
	node[pos=0.1,pin={[pin distance=1.0cm]-90:{$\bpi^1 u$}}] {};
\plot[red  ,mark=o     ] table[x=nrdofs,y=GB ] {figures/lshape_adapted/data/errors_L2.txt}
	node[pos=0.9,pin={[pin distance=0.5cm]-90:{$u_h^1$}}] {};
\plot[blue ,mark=square] table[x=nrdofs,y=QI ] {figures/lshape_adapted/data/errors_L2.txt}
	node[pos=0.9,pin={[pin distance=0.5cm]90:{$\CJ_h^1 u$}}] {};
\plot[black,mark=|     ] table[x=nrdofs,y=LI ] {figures/lshape_adapted/data/errors_L2.txt}
	node[pos=0.6,pin={[pin distance=0.5cm]90:{$\bII^1 u$}}] {};
% \plot[black,mark=*     ] table[x=nrdofs,y=hLB] {figures/lshape_adapted/data/errors_L2.txt};

\SlopeTriangle{0.5}{-0.15}{0.15}{-1}{$N^{-1}$}{};
\plot[black,dashed,domain=1.e2:3.e3] {0.1*x^(-1)};

\end{axis}
\end{tikzpicture}

(b) $L^2$ errors
\end{minipage}

\vspace*{0.8cm}

\begin{minipage}{.45\linewidth}
\centering
\begin{tikzpicture}
\begin{axis}
[
	width=\linewidth,
	xmode=log,
	ymode=log,
	xlabel={$N$}
]

\plot[blue,mark=o     ,mark options={solid}       ] table[x=nrdofs,y=eH1] {figures/lshape_adapted/data/ratios.txt}
	node[pos=0.2,pin={[pin distance=1.5cm]-90:{$\|\grad(u-\CJ_h^1 u)\|_\Omega$}}] {};
\plot[blue,mark=o     ,mark options={solid},dashed] table[x=nrdofs,y=EH1] {figures/lshape_adapted/data/ratios.txt}
	node[pos=0.9,pin={[pin distance=0.5cm]90:{$\eta_{H^1}(\bpi^1 u)$}}] {};
\plot[red ,mark=square,mark options={solid}       ] table[x=nrdofs,y=eL2] {figures/lshape_adapted/data/ratios.txt}
	node[pos=0.9,pin={[pin distance=1cm]180:{$\|u-\CJ_h^1 u\|_\Omega$}}] {};
\plot[red ,mark=square,mark options={solid},dashed] table[x=nrdofs,y=EL2] {figures/lshape_adapted/data/ratios.txt}
	node[pos=0.9,pin={[pin distance=0.5cm]180:{$\eta_{L^2}(\bpi^1 u)$}}] {};

\end{axis}
\end{tikzpicture}

(c) guaranteed bounds 
\end{minipage}
\begin{minipage}{.45\linewidth}
\centering
\begin{tikzpicture}
\begin{axis}
[
	width=\linewidth,
	xmode=log,
	ymode=log,
	xlabel={$N$}
]

\plot[blue,mark=o     ] table[x=nrdofs,y expr=\thisrow{EH1}/\thisrow{eH1}] {figures/lshape_adapted/data/ratios.txt}
	node[pos=0.25,pin={[pin distance=1.0cm]90:{$\displaystyle \frac{\eta_{H^1}(\bpi^1 u)}{\|\grad(u-\CJ_h^1 u)\|_\Omega}$}}] {};
\plot[red ,mark=square] table[x=nrdofs,y expr=\thisrow{EL2}/\thisrow{eL2}] {figures/lshape_adapted/data/ratios.txt}
	node[pos=0.85,pin={[pin distance=1.0cm]-90:{$\displaystyle \frac{\eta_{L^2}(\bpi^1 u)}{\|u-\CJ_h^1 u\|_\Omega}$}}] {};

\end{axis}
\end{tikzpicture}

(d) effectivity indices
\end{minipage}
\caption{Singular function example with adapted meshes}
\label{figure_lshape_adapted}
\end{figure}

\subsection{Discussion}
\label{section_numerics_comments}

In all the benchmarks, we obtain the expected rates of convergence
for our quasi-interpolation operator $\CJ_h^1$. 
In the $H^1$ seminorm, the errors of the global-best approximation $u_h^1$,
the Lagrange interpolant $\bII^1 u$, and our quasi-interpolant $\CJ_h^1 u$ are always similar. 
In the $L^2$ norm, our quasi-interpolant $\CJ_h^1 u$ is always more accurate than the Lagrange
interpolant $\bII^1 u$ and the global-best approximation $u_h^1$, and sometimes significantly so.
In both the $H^1$ seminorm and the $L^2$ norm, the local-best approximation
provides better errors than the conforming approximation, but the improvement
is typically marginal. Our guaranteed upper bounds never underestimate the errors
as predicted by the theory of Theorems~\ref{thm_error_H1} and~\ref{thm_error_L2},
and behave with the correct rates. They allow for error certification. The overestimation
factor is remarkably stable in all tested situations, with values around $10$ in the
$H^1$ seminorm and $100$ in the $L^2$ norm.

\bibliographystyle{amsplain}
\bibliography{bibliography.bib}

\end{document}